
\documentclass[reqno,a4paper]{amsart}
\usepackage{amssymb}
\usepackage{amsmath}
\usepackage{amsfonts}

\setcounter{MaxMatrixCols}{10}

\topmargin=-1cm
\oddsidemargin=0.3cm
\evensidemargin=0.3cm
\textheight=24cm
\textwidth=16cm
\newtheorem{theorem}{Theorem}[section]
\theoremstyle{plain}

\newtheorem{corollary}{Corollary}[section]

\newtheorem{definition}{Definition}[section]
\newtheorem{example}{Example}[section]

\newtheorem{lemma}{Lemma}[section]

\numberwithin{equation}{section}
\input{tcilatex}

\begin{document}
\title{On $f$-Eikonal Helices And $f$-Eikonal Slant Helices In Riemannian
Manifolds}
\author{Ali \c{S}enol}
\address{Department of Mathematics, Faculty of Science, \c{C}ank\i r\i\
Karatekin University, \c{C}ank\i r\i , Turkey}
\email{asenol@karatekin.edu.tr}
\urladdr{}
\author{Evren Z\i plar}
\address{Department of Mathematics, Faculty of Science, University of
Ankara, Tando\u{g}an, Turkey}
\email{evrenziplar@yahoo.com}
\author{Yusuf Yayl\i }
\address{Department of Mathematics, Faculty of Science, University of
Ankara, Tando\u{g}an, Turkey}
\email{yayli@science.ankara.edu.tr}
\urladdr{}
\date{November 21, 2012.}
\subjclass[2000]{14H45, 14H50, 53A04}
\keywords{Eikonal helice, eikonal slant helice, harmonic curvature.\\
Corresponding author: Evren ZIPLAR, evrenziplar@yahoo.com}

\begin{abstract}
In this paper, we define $f$-eikonal helix curves and $f$-eikonal $V_{n}$%
-slant helix curves in a $n$-dimensional Riemannian manifold. Also, we give
the definition of harmonic curvature functions related to $f$-eikonal helix
curves and $f$-eikonal $V_{n}$-slant helix curves in a $n$-dimensional
Riemannian manifold. Moreover, we give characterizations for $f$-eikonal
helix curves and $f$-eikonal $V_{n}$-slant helix curves by making use of the
harmonic curvature functions.
\end{abstract}

\maketitle

\section{\textbf{Introduction}}

The helices share common origins in the geometries of the platonic solids,
with inherent hierarchical potential that is typical of biological
structures.The helices provide an energy-efficient solution to close-packing
in molecular biology, a common motif in protein construction, and a readily
observable pattern at many size levels throughout the body. The helices are
described in a variety of anatomical structures, suggesting their importance
to structural biology and manual therapy \cite{scarr}.

A general helix in Euclidean 3-space $E^{3}$ is defined by the property that
tangent makes a constant angle with a fixed straight line(the axis of
general helix). A classical result stated by Lancret in 1802 and first
proved by de Saint Venant in 1845 (\cite{Lancret} and \cite{Struik}) is: A
necessary and sufficient condition that a curve be a general helix is that
the ratio of curvature to torsion be constant. In \cite{haci}$,$ \"{O}zdamar
and Hac\i saliho\u{g}lu defined harmonic curvature functions $H_{i}$ $\left(
1\leq i\leq n-2\right) $ of a curve $\alpha $ in $n-$dimensional Euclidean
space $E^{n}$. They generalized inclined curves (general helix) in $E^{3}$
to $E^{n}$ and then gave a characterization for the inclined curves in $%
E^{n} $. Then, Izumiya and Takeuchi defined a new kind of helix (slant
helix) and they gave a characterization of slant helices in Euclidean $3-$%
space $E^{3}$ \cite{izumiya}. Kula and Yayl\i\ have studied spherical images
of tangent indicatrix and binormal indicatrix of a slant helix and they
showed that the spherical images are spherical helix \cite{K and Y}. In
2008, \"{O}nder \emph{et al}. defined a new kind of slant helix in Euclidean 
$4-$space $E^{4} $ which is called $B_{2}-$slant helix and they gave some
characterizations of these slant helices in Euclidean $4-$space $E^{4}$ \cite%
{onder} . And then in 2009, G\"{o}k \emph{et al}.defined a new kind of slant
helix in Euclidean $n-$space $E^{n}$, $n>3$, which they called $V_{n}-$slant
helix and they gave some characterizations of these slant helices in
Euclidean $n-$space \cite{gok}. On the other hand, Camc\i\ \emph{et al.}give
some characterizations for a non-degenerate curve to be a generalized helix
by using its harmonic curvatures \cite{cam}.

Let $M$ be a Riemannian manifold, where $\left\langle ,\right\rangle $ is
the metric. Let $f:M\rightarrow 
\mathbb{R}
$ be a function and let $\nabla f$ be its gradient, i.e., $%
df(X)=\left\langle \nabla f,X\right\rangle $. We say that $f$ is eikonal if
it satisfies: $\left\Vert \nabla f\right\Vert $ is constant \cite{scala} . $%
\nabla f$ is used many areas of science such as mathematical physics and
geometry. So, $\nabla f$ is very important subject. For example, the
Riemannian condition $\left\Vert \nabla f\right\Vert ^{2}=1$ (for
non-constant $f$ on connected $M$) is precisely the eikonal equation of
geometrical optics. Thus on a connected $M$, a non-constant real valued $f$
is Riemannian iff $f$ satisfies this eikonal equation. In the geometrical
optical interpretation, the level sets of $f$ are interpreted as wave
fronts. The characteristics of the eikonal equation (as a partial
differential equation), are then the solutions of the gradient flow equation
for $f$ (an ordinary differential equation), $x^{\prime }=\func{grad}f(x)$,
which are geodesics of $M$ orthogonal to the level sets of $f$, and which
are parametrized by arc length. These geodesics can be interpreted as light
rays orthogonal to the wave fronts \cite{Fischer} .

In this work, we introduced $f$-eikonal helices and $f$-eikonal slant
helices in a $n$-dimensional Riemannian manifold $M^{n}$. Moreover, in $M^{n}
$, we give the definition of harmonic curvature functions. Also, we give new
characterizations for $f$-eikonal helix curves and $f$-eikonal $V_{n}$-slant
helix curves by making use of the harmonic curvature functions.

\section{\textbf{Preliminaries}}

In this section, we give some basic definitions from differential geometry.

\begin{definition}
Let $\alpha =\alpha \left( s\right) $ be a smooth curve parametrized by its
arc length s in $n$-dimensional Riemannian manifold $M^{n}$. If there exist
orthonormal frame fields $\left\{ V_{1}=\alpha ^{\prime },...,V_{n}\right\} $
along $\alpha $ and positive functions $k_{1}\left( s\right)
,...,k_{n-1}\left( s\right) $ satisfying the following system of ordinary
equations%
\begin{equation}
\nabla _{\alpha ^{\prime }}V_{i}\left( s\right) =-k_{i-1}\left( s\right)
V_{i-1}\left( s\right) +k_{i}\left( s\right) V_{i+1}\left( s\right) ,\text{ }%
i=1,...,n,
\end{equation}%
where $V_{0}=V_{n+1}=0$ and\ $\ \nabla _{\alpha ^{\prime }}$\ denotes the
Riemannian connexion along $\alpha $, then the curve $\alpha $ is called a
Frenet curve of proper $n$. And, the equation (2.1) is called the Frenet
formula for the Frenet curve $\alpha $. The functions $k_{i}\left( s\right) $
$(i=1,...,n-1)$ and the orthonormal frame $\left\{ V_{1},...,V_{n}\right\} $
are called the curvatures and the Frenet frame of $\alpha $, respectively 
\cite{M and A}.
\end{definition}

\begin{definition}
Let $M$ be a Riemannian manifold, where $\left\langle ,\right\rangle $ is
the metric. Let $f:M\rightarrow 
\mathbb{R}
$ be a function and let $\nabla f$ be its gradient, i.e., $%
df(X)=\left\langle \nabla f,X\right\rangle $. We say that $f$ is eikonal if
it satisfies:%
\begin{equation*}
\left\Vert \nabla f\right\Vert =\text{constant.}
\end{equation*}%
\cite{scala}.
\end{definition}

\section{$f$\textbf{-eikonal helix curves and their harmonic curvature
functions}}

In this section, we define $f$-eikonal helix curves and we give
characterizations for a $f$-eikonal helix curve in $n$-dimensional
Riemannian manifold $M^{n}$ by using harmonic curvature functions of the
curve.

\begin{definition}
Let $\alpha \left( s\right) :I\subset 
\mathbb{R}
\rightarrow M^{n}$ be a Frenet curve of proper $n$ in $n$-dimensional
Riemannian manifold $M^{n}$ and let the functions $k_{i}\left( s\right) $ ($%
i=1,...,n-1$) be curvatures of the curve $\alpha $. Harmonic curvatures of
the curve $\alpha $ are defined by $H_{i}:I\subset 
\mathbb{R}
\rightarrow 
\mathbb{R}
$ along $\alpha $ in $M^{n}$ , $i=1,...,n-2$, such that%
\begin{equation*}
H_{1}=\frac{k_{1}}{k_{2}},\text{ }H_{i}=\left\{ V_{1}\left[ H_{i-1}\right]
+k_{i}H_{i-2}\right\} \frac{1}{k_{i+1}}
\end{equation*}%
for $i=2,...,n-2$, where $V_{1}$ is the unit vector tangent vector field of
the curve $\alpha $.
\end{definition}

\begin{definition}
\textbf{\ }Let $M^{n}$ be a Riemannian manifold with the metric $%
\left\langle ,\right\rangle $ and let $\alpha \left( s\right) $ be a Frenet
curve with the unit tangent vector field $V_{1}$ in $M^{n}$. Let $%
f:M^{n}\rightarrow 
\mathbb{R}
$ be a eikonal function along curve $\alpha $, i.e. $\left\Vert \nabla
f\right\Vert =$constant along the curve $\alpha $. If the function $%
\left\langle \nabla f,V_{1}\right\rangle $ is non-zero constant along $%
\alpha $, then $\alpha $ is called a $f$-eikonal helix curve. And, $\nabla f$
is called the axis of the $f$-eikonal helix curve $\alpha $.
\end{definition}

\begin{example}
We consider the Riemannian manifold $M^{3}=%
\mathbb{R}
^{3}$ with the Euclidean metric $\left\langle ,\right\rangle $. Let%
\begin{equation*}
f:M^{3}\rightarrow 
\mathbb{R}%
\end{equation*}%
\begin{equation*}
\left( x,y,z\right) \rightarrow f\left( x,y,z\right) =x^{2}+y+z^{2}
\end{equation*}%
be a function defined on $M^{3}$. Then, the curve%
\begin{equation*}
\alpha :I\subset 
\mathbb{R}
\rightarrow M^{3}
\end{equation*}%
\begin{equation*}
s\rightarrow \alpha \left( s\right) =(\cos \frac{s}{\sqrt{2}},\frac{s}{\sqrt{%
2}},\sin \frac{s}{\sqrt{2}})
\end{equation*}%
is a $f$-eikonal helix curve on $M^{3}$.

Firstly, we will show that $f$ is a eikonal function along the curve $\alpha 
$. If we compute $\nabla f$, we find $\nabla f$ as%
\begin{equation*}
\nabla f=\left( 2x,1,2z\right) \text{.}
\end{equation*}%
So, we get%
\begin{equation*}
\left\Vert \nabla f\right\Vert =\sqrt{4\left( x^{2}+z^{2}\right) +1}\text{.}
\end{equation*}%
And, if we compute $\left\Vert \nabla f\right\Vert $ along the curve $\alpha 
$, we find%
\begin{equation*}
\left\Vert \nabla f\right\Vert |_{\alpha }=\sqrt{5}=\text{constant.}
\end{equation*}%
That is, $f$ is a eikonal function along the curve $\alpha $.

Now, we will show that the function $\left\langle \nabla
f,V_{1}\right\rangle $ is non-zero constant along the curve $\alpha $. Since%
\begin{equation*}
\nabla f|_{\alpha }=(2\cos \frac{s}{\sqrt{2}},1,2\sin \frac{s}{\sqrt{2}})
\end{equation*}%
and%
\begin{equation*}
V_{1}=(-\frac{1}{\sqrt{2}}\sin \frac{s}{\sqrt{2}},\frac{1}{\sqrt{2}},\frac{1%
}{\sqrt{2}}\cos \frac{s}{\sqrt{2}})\text{ ,}
\end{equation*}%
we obtain 
\begin{equation*}
\left\langle \nabla f|_{\alpha },V_{1}\right\rangle =\frac{1}{\sqrt{2}}=%
\text{constant}
\end{equation*}%
along the curve $\alpha $. Consequently, $\alpha $ is a $f$-eikonal helix
curve on $M$.
\end{example}

\begin{theorem}
Let $M^{n}$ be a $n$-dimensional Riemannian manifold with the metric $%
\left\langle ,\right\rangle $ and complete connected smooth without
boundary. Let $M^{n}$ be isometric to a Riemannian product $N\times 
\mathbb{R}
$. Let us assume that $f:M^{n}\rightarrow 
\mathbb{R}
$ be a non-trivial affine function (see main Theorem in \cite{Innami}) and $%
\alpha \left( s\right) $ be a Frenet curve of proper $n$ in $M^{n}$. If $%
\alpha $ is a $f$-eikonal helix curve with the axis $\nabla f$, then the
system%
\begin{equation}
\left\langle V_{i+2},\nabla f\right\rangle =H_{i}\left\langle V_{1},\nabla
f\right\rangle ,\text{ }i=1,...,n-2
\end{equation}%
holds, where $\left\{ V_{1},...,V_{n}\right\} $ and $\left\{
H_{1},...,H_{n-2}\right\} $ are the Frenet frame and the Harmonic curvatures
of $\alpha $, respectively.
\end{theorem}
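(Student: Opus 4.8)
The plan is to squeeze everything out of the single fact that a non-trivial affine function has vanishing Hessian. By definition an affine function $f$ on $M^{n}$ restricts to an affine function of arclength along every geodesic, which is equivalent to $\nabla df=0$, i.e.\ the gradient field is parallel: $\nabla _{X}\nabla f=0$ for every $X$. This is where the structural hypotheses enter: for $M^{n}\cong N\times \mathbb{R}$ the main theorem of \cite{Innami} guarantees such an $f$ (essentially the coordinate on the $\mathbb{R}$-factor), and parallelism of $\nabla f$ makes $\left\Vert \nabla f\right\Vert $ automatically constant, so the eikonal clause of Definition 3.2 holds for free and the only working hypothesis is the helix condition that $\left\langle V_{1},\nabla f\right\rangle =c$ is a (nonzero) constant along $\alpha $. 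Restricting to the curve, the one identity I will invoke again and again is $\nabla _{V_{1}}\nabla f=0$.

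First I would settle the low-index cases. Differentiating the constant $\left\langle V_{1},\nabla f\right\rangle $ along $\alpha $ and using $\nabla _{V_{1}}V_{1}=k_{1}V_{2}$ together with $\nabla _{V_{1}}\nabla f=0$ gives $k_{1}\left\langle V_{2},\nabla f\right\rangle =0$, hence $\left\langle V_{2},\nabla f\right\rangle =0$ because $k_{1}>0$. Differentiating this relation once more and using $\nabla _{V_{1}}V_{2}=-k_{1}V_{1}+k_{2}V_{3}$ gives $-k_{1}\left\langle V_{1},\nabla f\right\rangle +k_{2}\left\langle V_{3},\nabla f\right\rangle =0$, i.e.\ $\left\langle V_{3},\nabla f\right\rangle =\frac{k_{1}}{k_{2}}\left\langle V_{1},\nabla f\right\rangle =H_{1}\left\langle V_{1},\nabla f\right\rangle $, which is the claimed system for $i=1$. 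Adopting the convention $H_{0}=0$, the first relation reads $\left\langle V_{2},\nabla f\right\rangle =H_{0}\left\langle V_{1},\nabla f\right\rangle $, so both computations fit a single template, ready for induction.

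Then I would run the induction on $i$. Suppose $\left\langle V_{j+2},\nabla f\right\rangle =H_{j}\left\langle V_{1},\nabla f\right\rangle $ for all $j\leq i$ with $1\leq i\leq n-3$. Differentiate the $i$-th relation along $\alpha $: the left side becomes $\left\langle \nabla _{V_{1}}V_{i+2},\nabla f\right\rangle =-k_{i+1}\left\langle V_{i+1},\nabla f\right\rangle +k_{i+2}\left\langle V_{i+3},\nabla f\right\rangle $ by the Frenet formula and $\nabla _{V_{1}}\nabla f=0$, while the right side becomes $V_{1}[H_{i}]\left\langle V_{1},\nabla f\right\rangle $ since $\left\langle V_{1},\nabla f\right\rangle $ is constant. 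Replacing $\left\langle V_{i+1},\nabla f\right\rangle $ by $H_{i-1}\left\langle V_{1},\nabla f\right\rangle $ from the hypothesis (this is where the $H_{0}=0$ convention covers the case $i=1$) and solving for $\left\langle V_{i+3},\nabla f\right\rangle $ yields
\begin{equation*}
\left\langle V_{i+3},\nabla f\right\rangle =\frac{V_{1}[H_{i}]+k_{i+1}H_{i-1}}{k_{i+2}}\left\langle V_{1},\nabla f\right\rangle ,
\end{equation*}
and the coefficient is exactly $H_{i+1}$ by the recursion in Definition 3.1 with the index shifted by one. This closes the induction and gives the claimed system for all $i=1,\dots ,n-2$.

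The only genuinely non-routine step is the first one: seeing that ``affine'' must be used through $\nabla df=0$, so that $\nabla f$ is parallel along $\alpha $; after that the argument is forced entirely by the Frenet equations and the definition of the harmonic curvatures. Two bookkeeping points deserve a sentence in the final write-up: the convention $H_{0}=0$, which lets $\left\langle V_{2},\nabla f\right\rangle =0$ be absorbed into the statement as the $i=0$ instance; and the remark that the value $\left\langle V_{1},\nabla f\right\rangle \neq 0$ is not actually needed for this implication --- only its constancy is used --- the nonvanishing becoming relevant only if one wants to invert the system into a characterization.
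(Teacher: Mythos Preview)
Your proof is correct and follows essentially the same route as the paper: both use parallelism of $\nabla f$ (the paper cites Sakai's Lemma~2.3, you invoke $\nabla df=0$ directly from the definition of affine) to differentiate $\left\langle V_{1},\nabla f\right\rangle=\text{const}$, obtain $\left\langle V_{2},\nabla f\right\rangle=0$ and $\left\langle V_{3},\nabla f\right\rangle=H_{1}\left\langle V_{1},\nabla f\right\rangle$, and then close the induction by differentiating the $i$-th relation and feeding in the $(i-1)$-th via the Frenet formulas and the recursion for $H_{i+1}$. Your explicit $H_{0}=0$ convention and the remark that only constancy (not nonvanishing) of $\left\langle V_{1},\nabla f\right\rangle$ is used are tidy additions, but the underlying argument is the same as the paper's.
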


\begin{proof}
Since $\left\{ V_{1},...,V_{n}\right\} $ is the orthonormal frame of the
curve $\alpha $ in $M^{n}$, $\nabla f$ can be expressed in the form%
\begin{equation}
\nabla f=\lambda _{1}V_{1}+...+\lambda _{n}V_{n}\text{.}
\end{equation}%
Doing dot product with $V_{1}$ in each part of (3.2), we get%
\begin{equation}
\left\langle \nabla f,V_{1}\right\rangle =\lambda _{1}=\left\Vert \nabla
f\right\Vert \cos \left( \theta \right) =\text{constant}
\end{equation}%
since $\alpha $ is a $f$-eikonal helix curve. If we take the derivative in
each part of (3.3) in the direction $V_{1}$ in $M^{n}$, then we have%
\begin{equation}
\left\langle \nabla _{V_{1}}\nabla f,V_{1}\right\rangle +\left\langle \nabla
f,\nabla _{V_{1}}V_{1}\right\rangle =0\text{.}
\end{equation}%
On the other hand, from Lemma 2.3 (see \cite{Sakai}), $\nabla f$ is parallel
in $M^{n}$. That is, $\nabla _{V_{1}}\nabla f=0$. Hence, by using (3.4) and
Frenet formulas, we obtain%
\begin{equation}
k_{1}\left\langle \nabla f,V_{2}\right\rangle =0\text{.}
\end{equation}%
And, since $k_{1}$ is positive function, from (3.5), we get%
\begin{equation}
\left\langle \nabla f,V_{2}\right\rangle =0\text{.}
\end{equation}%
By taking the derivative in each part of (3.6) in the direction $V_{1}$ in $%
M^{n}$, we can write the equality%
\begin{equation}
\left\langle \nabla _{V_{1}}\nabla f,V_{2}\right\rangle +\left\langle \nabla
f,\nabla _{V_{1}}V_{2}\right\rangle =0\text{.}
\end{equation}%
And, since $\nabla _{V_{1}}\nabla f=0$, by using (3.7) and Frenet formulas,
we obtain%
\begin{equation}
-k_{1}\left\langle \nabla f,V_{1}\right\rangle +k_{2}\left\langle \nabla
f,V_{3}\right\rangle =0\text{.}
\end{equation}%
Therefore, from (3.8), we have 
\begin{equation}
\left\langle \nabla f,V_{3}\right\rangle =\lambda _{3}=\frac{k_{1}}{k_{2}}%
\left\langle \nabla f,V_{1}\right\rangle \text{.}
\end{equation}%
Moreover, since $H_{1}=\dfrac{k_{1}}{k_{2}}$, from (3.9), we can write%
\begin{equation*}
\left\langle \nabla f,V_{3}\right\rangle =H_{1}\left\langle \nabla
f,V_{1}\right\rangle \text{.}
\end{equation*}%
It follows that the equality (3.1) is true for $i=1$. According to the
induction theory, let us assume that the equality (3.1) is true for all $k$,
where $1\leq k\leq i$ for some positive integers $i$. Then, we will prove
that the equality (3.1) is true for $i+1$. Since the equality (3.1) is true
for some positive integers $i$, we can write%
\begin{equation}
\left\langle V_{i+2},\nabla f\right\rangle =H_{i}\left\langle V_{1},\nabla
f\right\rangle
\end{equation}%
for some positive integers $i$. If we take derivative in each part of (3.10)
in the direction $V_{1}$ in $M^{n}$, we have%
\begin{equation}
\left\langle \nabla _{V_{1}}V_{i+2},\nabla f\right\rangle +\left\langle
V_{i+2},\nabla _{V_{1}}\nabla f\right\rangle =V_{1}\left[ H_{i}\left\langle
V_{1},\nabla f\right\rangle \right] \text{.}
\end{equation}%
And, by using (3.11) and Frenet formulas, we get%
\begin{equation}
-k_{i+1}\left\langle V_{i+1},\nabla f\right\rangle +k_{i+2}\left\langle
V_{i+3},\nabla f\right\rangle +\left\langle V_{i+2},\nabla _{V_{1}}\nabla
f\right\rangle =V_{1}\left[ H_{i}\left\langle V_{1},\nabla f\right\rangle %
\right] \text{.}
\end{equation}%
Morever, $\nabla _{V_{1}}\nabla f=0$. Hence, from (3.12), we can write%
\begin{equation}
-k_{i+1}\left\langle V_{i+1},\nabla f\right\rangle +k_{i+2}\left\langle
V_{i+3},\nabla f\right\rangle =V_{1}\left[ H_{i}\left\langle V_{1},\nabla
f\right\rangle \right] \text{.}
\end{equation}%
\ And so, we obtain%
\begin{equation}
\left\langle V_{i+3},\nabla f\right\rangle =\left\{ V_{1}\left[
H_{i}\left\langle V_{1},\nabla f\right\rangle \right] +k_{i+1}\left\langle
V_{i+1},\nabla f\right\rangle \right\} \frac{1}{k_{i+2}}\text{.}
\end{equation}%
On the other hand, since the equality (3.1) is true for $i-1$ according to
the induction hypothesis, we have%
\begin{equation}
\left\langle V_{i+1},\nabla f\right\rangle =H_{i-1}\left\langle V_{1},\nabla
f\right\rangle \text{.}
\end{equation}%
Therefore, by using (3.14) and (3.15), we get%
\begin{equation}
\left\langle V_{i+3},\nabla f\right\rangle =\left\{ V_{1}\left[ H_{i}\right]
+k_{i+1}H_{i-1}\right\} \cdot \frac{1}{k_{i+2}}\cdot \text{ }\left\langle
V_{1},\nabla f\right\rangle \text{.}
\end{equation}%
Moreover, we obtain%
\begin{equation}
\text{ }H_{i+1}=\left\{ V_{1}\left[ H_{i}\right] +k_{i+1}H_{i-1}\right\} 
\frac{1}{k_{i+2}}\text{.}
\end{equation}%
for $i+1$ in the Definition 3.1. So, we have%
\begin{equation*}
\left\langle V_{i+3},\nabla f\right\rangle =H_{i+1}\left\langle V_{1},\nabla
f\right\rangle
\end{equation*}%
by using (3.16) and (3.17). It follows that the equality (3.1) is true for $%
i+1$. Consequently, we get%
\begin{equation*}
\left\langle V_{i+2},\nabla f\right\rangle =H_{i}\left\langle V_{1},\nabla
f\right\rangle
\end{equation*}%
for all $i$ according to induction theory. This completes the proof. \ \ \ \
\ \ \ \ \ \ \ \ \ \ \ \ \ \ \ \ \ \ \ \ \ \ \ \ \ \ \ \ \ \ \ \ \ \ \ \ \ \
\ \ \ \ \ \ \ \ \ \ \ \ \ \ \ \ \ \ \ \ \ \ \ \ \ \ \ \ \ \ \ \ \ \ \ \ \ \
\ \ \ \ \ \ \ \ \ \ \ \ \ \ \ \ \ \ \ \ \ \ \ \ \ \ \ \ \ \ \ \ \ \ \ \ \ \
\ \ \ \ \ \ \ \ \ \ \ \ \ \ \ \ \ \ \ \ \ \ \ \ \ \ \ \ \ \ \ \ \ \ \ \ \ \
\ \ \ \ \ \ 
\end{proof}

\begin{theorem}
Let $M^{n}$ be a $n$-dimensional Riemannian manifold with the metric $%
\left\langle ,\right\rangle $ and complete connected smooth without
boundary. Let $M^{n}$ be isometric to a Riemannian product $N\times 
\mathbb{R}
$. Let us assume that $f:M^{n}\rightarrow 
\mathbb{R}
$ be a non-trivial affine function (see main Theorem in \cite{Innami}) and $%
\alpha \left( s\right) $ be a Frenet curve of proper $n$ in $M^{n}$. If $%
\alpha $ is a $f$-eikonal helix curve with the axis $\nabla f$, then the
axis of the curve $\alpha $:%
\begin{equation*}
\nabla f=\left\Vert \nabla f\right\Vert \cos \left( \theta \right) \left(
V_{1}+H_{1}V_{3}+...+H_{n-2}V_{n}\right) \text{,}
\end{equation*}%
where $\left\{ V_{1},...,V_{n}\right\} $ and $\left\{
H_{1},...,H_{n-2}\right\} $ are the Frenet frame and the Harmonic curvatures
of $\alpha $, respectively.
\end{theorem}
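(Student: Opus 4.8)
The plan is to read off the coefficients of $\nabla f$ in the Frenet frame directly from Theorem 3.1. Since $\{V_{1},\dots ,V_{n}\}$ is orthonormal along $\alpha $, expanding $\nabla f=\lambda _{1}V_{1}+\dots +\lambda _{n}V_{n}$ as in (3.2) forces $\lambda _{j}=\left\langle \nabla f,V_{j}\right\rangle $ for every $j$. Hence the statement reduces to computing these $n$ inner products, each of which has already been handled in the proof of Theorem 3.1.

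First I would recall from the $f$-eikonal helix hypothesis that $\lambda _{1}=\left\langle \nabla f,V_{1}\right\rangle =\left\Vert \nabla f\right\Vert \cos \left( \theta \right) $ is a non-zero constant, exactly as in (3.3). Next, I would quote the intermediate computation inside the proof of Theorem 3.1: because $M^{n}$ splits as $N\times \mathbb{R}$ and $f$ is a non-trivial affine function, $\nabla f$ is parallel, i.e. $\nabla _{V_{1}}\nabla f=0$; differentiating $\lambda _{1}=\text{constant}$ in the direction $V_{1}$ and using the Frenet formulas gives $k_{1}\left\langle \nabla f,V_{2}\right\rangle =0$, and since $k_{1}>0$ we conclude $\lambda _{2}=\left\langle \nabla f,V_{2}\right\rangle =0$.

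Then I would apply Theorem 3.1 itself: for each $i=1,\dots ,n-2$ it yields
\[
\lambda _{i+2}=\left\langle V_{i+2},\nabla f\right\rangle =H_{i}\left\langle V_{1},\nabla f\right\rangle =\left\Vert \nabla f\right\Vert \cos \left( \theta \right) H_{i}.
\]
Substituting $\lambda _{1}=\left\Vert \nabla f\right\Vert \cos \left( \theta \right) $, $\lambda _{2}=0$, and $\lambda _{i+2}=\left\Vert \nabla f\right\Vert \cos \left( \theta \right) H_{i}$ back into (3.2) and factoring out $\left\Vert \nabla f\right\Vert \cos \left( \theta \right) $ gives
\[
\nabla f=\left\Vert \nabla f\right\Vert \cos \left( \theta \right) \left( V_{1}+H_{1}V_{3}+\dots +H_{n-2}V_{n}\right) ,
\]
which is the claim.

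I do not expect a genuine obstacle here, since this theorem is essentially Theorem 3.1 rewritten in vector form. The only points requiring care are (i) justifying $\lambda _{2}=0$, which I would simply cite from the proof of Theorem 3.1 rather than rederive, and (ii) keeping the index bookkeeping straight, namely that the coefficients $\lambda _{3},\dots ,\lambda _{n}$ correspond respectively to $H_{1},\dots ,H_{n-2}$ and that the $V_{2}$-term genuinely vanishes so that the sum has the stated form.
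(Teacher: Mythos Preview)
Your proposal is correct and follows essentially the same approach as the paper: expand $\nabla f$ in the Frenet frame, show $\lambda_{2}=0$ via parallelism of $\nabla f$ and the Frenet formulas, and read off the remaining coefficients from Theorem~3.1. The only cosmetic difference is that the paper rederives $\lambda_{2}=0$ in place (equations (3.18)--(3.20)) rather than citing it from the proof of Theorem~3.1, but the computation is identical.
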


\begin{proof}
Since $\alpha $ is a $f$-eikonal helix curve , we can write%
\begin{equation}
\left\langle \nabla f,V_{1}\right\rangle =\text{constant.}
\end{equation}%
If we take the derivative in each part of (3.18) in the direction $V_{1}$ in 
$M^{n}$, then we have%
\begin{equation}
\left\langle \nabla _{V_{1}}\nabla f,V_{1}\right\rangle +\left\langle \nabla
f,\nabla _{V_{1}}V_{1}\right\rangle =0\text{.}
\end{equation}%
On the other hand, from Lemma 2.3 (see \cite{Sakai}), $\nabla f$ is parallel
in $M^{n}$. That's why, $\nabla _{V_{1}}\nabla f=0$. Then, we obtain%
\begin{equation}
k_{1}\left\langle \nabla f,V_{2}\right\rangle =0\text{.}
\end{equation}%
by using (3.19) and Frenet formulas. Since $k_{1}$ is positive function,
(3.20) implies that%
\begin{equation*}
\left\langle \nabla f,V_{2}\right\rangle =0\text{.}
\end{equation*}%
Hence, we can write the axis of $\alpha $ as%
\begin{equation}
\nabla f=\lambda _{1}V_{1}+\lambda _{3}V_{3}+...+\lambda _{n}V_{n}\text{.}
\end{equation}%
Moreover, from (3.21), we get%
\begin{eqnarray*}
\lambda _{1} &=&\left\langle \nabla f,V_{1}\right\rangle \\
\lambda _{3} &=&\left\langle \nabla f,V_{3}\right\rangle \\
&&. \\
&&. \\
&&. \\
\lambda _{n} &=&\left\langle \nabla f,V_{n}\right\rangle
\end{eqnarray*}%
by using dot product. On the other hand, from Theorem 3.1, we know that%
\begin{eqnarray}
\lambda _{1} &=&\left\langle \nabla f,V_{1}\right\rangle =\left\Vert \nabla
f\right\Vert \cos \left( \theta \right) \\
\lambda _{3} &=&\left\langle \nabla f,V_{3}\right\rangle =\left\Vert \nabla
f\right\Vert \cos \left( \theta \right) H_{1}  \notag \\
&&.  \notag \\
&&.  \notag \\
&&.  \notag \\
\lambda _{n} &=&\left\langle \nabla f,V_{n}\right\rangle =\left\Vert \nabla
f\right\Vert \cos \left( \theta \right) H_{n-2}\text{.}  \notag
\end{eqnarray}%
Thus, it can be easily obtained the axis of the curve $\alpha $ as%
\begin{equation*}
\nabla f=\left\Vert \nabla f\right\Vert \cos \left( \theta \right) \left(
V_{1}+H_{1}V_{3}+...+H_{n-2}V_{n}\right)
\end{equation*}%
by making use of \ the equality (3.21) and the system (3.22). This completes
the proof.
\end{proof}

\begin{theorem}
Let $M^{n}$ be a $n$-dimensional Riemannian manifold with the metric $%
\left\langle ,\right\rangle $ and complete connected smooth without
boundary. Let $M^{n}$ be isometric to a Riemannian product $N\times 
\mathbb{R}
$. Let us assume that $f:M^{n}\rightarrow 
\mathbb{R}
$ be a non-trivial affine function (see main Theorem in \cite{Innami} ) and $%
\alpha \left( s\right) $ be a Frenet curve of proper $n$ in $M^{n}$. If $%
\alpha $ is a $f$-eikonal helix curve, then $H_{n-2}\neq 0$ and $%
H_{1}^{2}+H_{2}^{2}+...+H_{n-2}^{2}$ is nonzero constant, where $\left\{
H_{1},...,H_{n-2}\right\} $ is the Harmonic curvatures of $\alpha $.
\end{theorem}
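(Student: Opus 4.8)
The plan is to read everything off the explicit expression for the axis established in Theorem 3.2,
\begin{equation*}
\nabla f=\left\Vert \nabla f\right\Vert \cos \theta \,\bigl(V_{1}+H_{1}V_{3}+\cdots +H_{n-2}V_{n}\bigr),
\end{equation*}
together with the fact, already invoked there via Lemma 2.3 of \cite{Sakai}, that $\nabla f$ is parallel along $\alpha$, i.e. $\nabla _{V_{1}}\nabla f=0$. First I would take the squared length of this identity; since $\{V_{1},\dots ,V_{n}\}$ is orthonormal, it yields
\begin{equation*}
\left\Vert \nabla f\right\Vert ^{2}=\left\Vert \nabla f\right\Vert ^{2}\cos ^{2}\theta \,\bigl(1+H_{1}^{2}+\cdots +H_{n-2}^{2}\bigr).
\end{equation*}
Since $\alpha$ is an $f$-eikonal helix, $\left\Vert \nabla f\right\Vert$ is constant and $\left\langle \nabla f,V_{1}\right\rangle =\left\Vert \nabla f\right\Vert \cos \theta$ is a nonzero constant; in particular $\left\Vert \nabla f\right\Vert \neq 0$ and $\cos \theta \neq 0$. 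Dividing gives $1+H_{1}^{2}+\cdots +H_{n-2}^{2}=\left\Vert \nabla f\right\Vert ^{2}/\left\langle \nabla f,V_{1}\right\rangle ^{2}$, a constant, so $H_{1}^{2}+\cdots +H_{n-2}^{2}$ is constant as well; and it is nonzero because $H_{1}=k_{1}/k_{2}>0$, $k_{1}$ and $k_{2}$ being positive functions. This settles the second assertion.

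For the assertion $H_{n-2}\neq 0$, I would argue by contradiction and suppose $H_{n-2}=0$. By Theorem 3.1 with $i=n-2$ this forces $\left\langle V_{n},\nabla f\right\rangle =0$. I would then differentiate repeatedly in the direction $V_{1}$, each time using $\nabla _{V_{1}}\nabla f=0$ and the Frenet formula $\nabla _{V_{1}}V_{j}=-k_{j-1}V_{j-1}+k_{j}V_{j+1}$ (with $V_{0}=V_{n+1}=0$); differentiating $\left\langle \nabla f,V_{j}\right\rangle =0$ produces
\begin{equation*}
k_{j-1}\left\langle \nabla f,V_{j-1}\right\rangle =k_{j}\left\langle \nabla f,V_{j+1}\right\rangle ,\qquad j=n,n-1,\dots ,2 .
\end{equation*}
Beginning with $\left\langle \nabla f,V_{n+1}\right\rangle =0$ and $\left\langle \nabla f,V_{n}\right\rangle =0$ and using positivity of every $k_{j}$, this cascades downward to give successively $\left\langle \nabla f,V_{n-1}\right\rangle =\left\langle \nabla f,V_{n-2}\right\rangle =\cdots =\left\langle \nabla f,V_{1}\right\rangle =0$. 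But $\left\langle \nabla f,V_{1}\right\rangle$ is a nonzero constant, a contradiction, so $H_{n-2}\neq 0$.

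Essentially all of this is routine Frenet computation in the style of the proofs of Theorems 3.1 and 3.2. The only step needing any care is the downward cascade in the second part: one must check that the two seed vanishings ($\left\langle \nabla f,V_{n+1}\right\rangle =0$, trivially, and $\left\langle \nabla f,V_{n}\right\rangle =0$, from $H_{n-2}=0$) actually propagate all the way down to $\left\langle \nabla f,V_{1}\right\rangle$, and to note that in the degenerate case $n=3$ the first claim is immediate since $H_{n-2}=H_{1}=k_{1}/k_{2}>0$. I do not expect any genuine obstacle beyond this bookkeeping.
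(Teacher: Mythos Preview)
Your argument is correct and follows essentially the same route as the paper. The first part, reading off $\sum H_i^2$ from the squared length of the Theorem~3.2 expression for $\nabla f$, is identical to the paper's; for $H_{n-2}\neq 0$ the paper alternates each differentiation step with an appeal to Theorem~3.1 so as to reach the contradiction $H_1=0$ rather than $\langle \nabla f,V_1\rangle=0$, but this is the same Frenet cascade you describe, just bookkept slightly differently.
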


\begin{proof}
Let $\alpha $ be a $f$-eikonal helix curve and $\left\{
V_{1},...,V_{n}\right\} $ be the Frenet frame of $\alpha $. Then, from
Theorem 3.2, we know that%
\begin{equation}
\nabla f=\left\Vert \nabla f\right\Vert \cos \left( \theta \right) \left(
V_{1}+H_{1}V_{3}+...+H_{n-2}V_{n}\right) \text{.}
\end{equation}%
Therefore, from (3.23), we can write%
\begin{equation}
\left\langle \nabla f,\nabla f\right\rangle =\left\Vert \nabla f\right\Vert
^{2}\left( \cos ^{2}(\theta )+\sum\limits_{i=1}^{n-2}H_{i}^{2}\cos
^{2}(\theta )\right) \text{.}
\end{equation}%
Moreover, by the definition of Riemannian metric, we have 
\begin{equation}
\left\langle \nabla f,\nabla f\right\rangle =\left\Vert \nabla f\right\Vert
^{2}\text{.}
\end{equation}%
Hence, from (3.24) and (3.25), we obtain%
\begin{equation}
\cos ^{2}(\theta )+\sum\limits_{i=1}^{n-2}H_{i}^{2}\cos ^{2}(\theta )=1\text{%
.}
\end{equation}%
It follows that 
\begin{equation*}
H_{1}^{2}+H_{2}^{2}+...+H_{n-2}^{2}=\tan ^{2}(\theta )=\text{constant.}
\end{equation*}%
Now, we will show that $H_{n-2}\neq 0$. We assume that $H_{n-2}=0$. Then,
for $i=n-2$ in Theorem 3.1, 
\begin{equation}
\left\langle V_{n},\nabla f\right\rangle =H_{n-2}\left\langle V_{1},\nabla
f\right\rangle =0\text{.}
\end{equation}%
If we take derivative in each part of (3.27) in the direction $V_{1}$ on $%
M^{n}$, then we have%
\begin{equation}
\left\langle \nabla _{V_{1}}V_{n},\nabla f\right\rangle +\left\langle
V_{n},\nabla _{V_{1}}\nabla f\right\rangle =0\text{.}
\end{equation}%
On the other hand, from Lemma 2.3 (see \cite{Sakai} ), $\nabla f$ is
parallel in $M^{n}$. That's why, $\nabla _{V_{1}}\nabla f=0$. Hence, we get%
\begin{equation}
-k_{n-1}\left\langle V_{n-1},\nabla f\right\rangle =0
\end{equation}%
by using (3.28) and Frenet formulas. So, from (3.29), we deduce that $%
\left\langle V_{n-1},\nabla f\right\rangle =0$ since $k_{n-1}$ is positive.
For $i=n-3$ in Theorem 3.1, 
\begin{equation*}
\left\langle V_{n-1},\nabla f\right\rangle =H_{n-3}\left\langle V_{1},\nabla
f\right\rangle \text{.}
\end{equation*}%
And, since $\left\langle V_{n-1},\nabla f\right\rangle =0$, $H_{n-3}=0$.
Continuing this process, we get $H_{1}=0$. Let us recall that $H_{1}=\dfrac{%
k_{1}}{k_{2}}$, thus we have a contradiction. Because, all the curvatures
are nowhere zero. As a result, $H_{n-2}\neq 0$. This completes the proof.
\end{proof}

\begin{lemma}
Let $\alpha \left( s\right) $ be a Frenet curve of proper $n$ in $n$%
-dimensional Riemannian manifold $M^{n}$ and let $H_{n-2}\neq 0$ be for $%
i=n-2$. Then, $H_{1}^{2}+H_{2}^{2}+...+H_{n-2}^{2}$ is a nonzero constant if
and only if $V_{1}\left[ H_{n-2}\right] =H_{n-2}^{\prime }=-k_{n-1}H_{n-3}$,
where $V_{1}$ and $\left\{ H_{1},...,H_{n-2}\right\} $ are the unit vector
tangent vector field and the Harmonic curvatures of $\alpha $, respectively.
\end{lemma}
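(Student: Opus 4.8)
The plan is to reduce the statement to a single telescoping identity for the $s$-derivative of $S:=H_1^2+\cdots+H_{n-2}^2$, using only the recursion of Definition 3.1 (together with the usual convention $H_0=0$); the statement is purely about $\alpha$ and its harmonic curvatures, so no hypothesis on $M^n$ enters.

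First I would rewrite the defining recursion in differentiated form. For $i=2,\dots,n-2$ the identity $H_i=\{V_1[H_{i-1}]+k_iH_{i-2}\}\tfrac1{k_{i+1}}$ is equivalent to $V_1[H_{i-1}]=k_{i+1}H_i-k_iH_{i-2}$, and after the index shift $j=i-1$ this reads
\[
H_j'=V_1[H_j]=k_{j+2}H_{j+1}-k_{j+1}H_{j-1},\qquad j=1,\dots,n-3 ,
\]
with $H_0=0$. The key point is that the recursion pins down $H_1',\dots,H_{n-3}'$ but leaves $H_{n-2}'$ free.

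Next I would differentiate $S$ along $\alpha$, splitting off the top term:
\[
\tfrac12\,V_1[S]=\sum_{i=1}^{n-2}H_iH_i'=\Big(\sum_{i=1}^{n-3}H_iH_i'\Big)+H_{n-2}H_{n-2}' .
\]
Substituting the first step into the finite sum gives $\sum_{i=1}^{n-3}H_iH_i'=\sum_{i=1}^{n-3}\bigl(k_{i+2}H_iH_{i+1}-k_{i+1}H_{i-1}H_i\bigr)$, and I expect the term $k_{i+2}H_iH_{i+1}$ of the $i$-th summand to cancel, after reindexing, against $-k_{i+2}H_iH_{i+1}$ coming from the $(i+1)$-st summand. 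So the sum telescopes: the lower boundary term $-k_2H_0H_1$ vanishes because $H_0=0$, while the upper boundary term $k_{n-1}H_{n-3}H_{n-2}$ survives, giving
\[
\tfrac12\,V_1[S]=k_{n-1}H_{n-3}H_{n-2}+H_{n-2}H_{n-2}'=H_{n-2}\bigl(k_{n-1}H_{n-3}+H_{n-2}'\bigr).
\]

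Finally I would read off the equivalence. Since the domain $I$ is an interval, $S$ is constant iff $V_1[S]\equiv0$, and $S$ is automatically nonzero because $H_1=k_1/k_2>0$ forces $S>0$. As $H_{n-2}\neq0$ on $I$ by hypothesis, the displayed identity shows $V_1[S]\equiv0$ iff $k_{n-1}H_{n-3}+H_{n-2}'\equiv0$, that is iff $H_{n-2}'=V_1[H_{n-2}]=-k_{n-1}H_{n-3}$, which is exactly the asserted characterization. The only real work is the index bookkeeping in the telescoping step; once the identity $\tfrac12\,V_1[S]=H_{n-2}(k_{n-1}H_{n-3}+H_{n-2}')$ is in hand, both implications are immediate, so I do not anticipate a genuine obstacle, the cancellation being forced by the shape of the recursion.
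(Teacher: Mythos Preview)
Your proposal is correct and follows essentially the same approach as the paper: rewrite the recursion as $H_j'=k_{j+2}H_{j+1}-k_{j+1}H_{j-1}$ and exploit the resulting telescoping sum in $\tfrac12 S'$. The only difference is organizational: the paper runs the telescoping computation separately in each direction, whereas you establish the single identity $\tfrac12 V_1[S]=H_{n-2}(k_{n-1}H_{n-3}+H_{n-2}')$ once and read off both implications at once, which is cleaner.
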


\begin{proof}
First, we assume that $H_{1}^{2}+H_{2}^{2}+...+H_{n-2}^{2}$ is a nonzero
constant . Consider the functions%
\begin{equation*}
H_{i}=\left\{ H_{i-1}^{\prime }+k_{i}H_{i-2}\right\} \frac{1}{k_{i+1}}
\end{equation*}%
for $3\leq i\leq n-2$. (see Definition 3.1). So, from the equality, we can
write%
\begin{equation}
k_{i+1}H_{i}=H_{i-1}^{\prime }+k_{i}H_{i-2}\text{, }3\leq i\leq n-2\text{.}
\end{equation}%
Hence, in (3.30), if we take $i+1$ instead of $i$, we get%
\begin{equation}
H_{i}^{\prime }=k_{i+2}H_{i+1}-k_{i+1}H_{i-1}\text{, }2\leq i\leq n-3
\end{equation}%
together with%
\begin{equation}
H_{1}^{\prime }=k_{3}H_{2}\text{.}
\end{equation}%
On the other hand, since $H_{1}^{2}+H_{2}^{2}+...+H_{n-2}^{2}$ is constant,
we have%
\begin{equation*}
H_{1}H_{1}^{\prime }+H_{2}H_{2}^{\prime }+...+H_{n-2}H_{n-2}^{\prime }=0
\end{equation*}%
and so,%
\begin{equation}
H_{n-2}H_{n-2}^{\prime }=-H_{1}H_{1}^{\prime }-H_{2}H_{2}^{\prime
}-...-H_{n-3}H_{n-3}^{\prime }\text{.}
\end{equation}%
By using (3.31) and (3.32), we obtain%
\begin{equation}
H_{1}H_{1}^{\prime }=k_{3}H_{1}H_{2}
\end{equation}%
and%
\begin{equation}
H_{i}H_{i}^{\prime }=k_{i+2}H_{i}H_{i+1}-k_{i+1}H_{i-1}H_{i}\text{, }2\leq
i\leq n-3\text{.}
\end{equation}%
Therefore, by using (3.33), (3.34) and (3.35), a algebraic calculus shows
that%
\begin{equation*}
H_{n-2}H_{n-2}^{\prime }=-k_{n-1}H_{n-3}H_{n-2}\text{.}
\end{equation*}%
Since $H_{n-2}\neq 0$, we get the relation $H_{n-2}^{\prime
}=-k_{n-1}H_{n-3} $.

Conversely, we assume that%
\begin{equation}
H_{n-2}^{\prime }=-k_{n-1}H_{n-3}
\end{equation}%
By using (3.36) and $H_{n-2}\neq 0$, we can write%
\begin{equation*}
H_{n-2}H_{n-2}^{\prime }=-k_{n-1}H_{n-2}H_{n-3}
\end{equation*}%
From (3.35), we have 
\begin{eqnarray*}
\text{for }i &=&n-3\text{, \ \ \ \ \ }H_{n-3}H_{n-3}^{\prime
}=k_{n-1}H_{n-3}H_{n-2}-k_{n-2}H_{n-4}H_{n-3} \\
\text{for }i &=&n-4\text{, \ \ \ \ \ }H_{n-4}H_{n-4}^{\prime
}=k_{n-2}H_{n-4}H_{n-3}-k_{n-3}H_{n-5}H_{n-4} \\
\text{for }i &=&n-5\text{, \ \ \ \ \ }H_{n-5}H_{n-5}^{\prime
}=k_{n-3}H_{n-5}H_{n-4}-k_{n-4}H_{n-6}H_{n-5} \\
&&\cdot \\
&&\cdot \\
&&\cdot \\
\text{for }i &=&2\text{, \ \ \ \ \ \ \ \ \ \ }H_{2}H_{2}^{\prime
}=k_{4}H_{2}H_{3}-k_{3}H_{1}H_{2}
\end{eqnarray*}%
and from (3.34), we have%
\begin{equation*}
H_{1}H_{1}^{\prime }=k_{3}H_{1}H_{2}\text{.}
\end{equation*}%
So, an algebraic calculus show that%
\begin{equation}
H_{1}H_{1}^{\prime }+H_{2}H_{2}^{\prime }+...+\text{\ }H_{n-5}H_{n-5}^{%
\prime }+H_{n-4}H_{n-4}^{\prime }+H_{n-3}H_{n-3}^{\prime
}+H_{n-2}H_{n-2}^{\prime }=0\text{.}
\end{equation}%
And, by integrating (3.37), we can easily say that%
\begin{equation*}
H_{1}^{2}+H_{2}^{2}+...+H_{n-2}^{2}
\end{equation*}%
is a non-zero constant. This completes the proof.
\end{proof}

\begin{corollary}
Let $M^{n}$ be a $n$-dimensional Riemannian manifold with the metric $%
\left\langle ,\right\rangle $ and complete connected smooth without
boundary. Let $M^{n}$ be isometric to a Riemannian product $N\times 
\mathbb{R}
$. Let us assume that $f:M^{n}\rightarrow 
\mathbb{R}
$ be a non-trivial affine function (see main Theorem in \cite{Innami} ) and $%
\alpha \left( s\right) $ be a Frenet curve of proper $n$ in $M^{n}$. If $%
\alpha $ is a $f$-eikonal helix curve, then $H_{n-2}^{\prime
}=-k_{n-1}H_{n-3}$.
\end{corollary}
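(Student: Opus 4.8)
The plan is to obtain this Corollary as an immediate consequence of the two results already established just above it, namely Theorem 3.3 and Lemma 3.1. Nothing new needs to be computed; the work is purely in checking that the hypotheses of those results are in force and that they chain together correctly.

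First I would invoke Theorem 3.3. Its hypotheses are exactly those of the Corollary: $M^{n}$ is complete, connected, smooth, without boundary, isometric to a Riemannian product $N\times\mathbb{R}$; $f\colon M^{n}\rightarrow\mathbb{R}$ is a non-trivial affine function; $\alpha$ is a Frenet curve of proper $n$; and $\alpha$ is an $f$-eikonal helix curve with axis $\nabla f$. Theorem 3.3 then yields two conclusions: $H_{n-2}\neq 0$, and $H_{1}^{2}+H_{2}^{2}+\dots+H_{n-2}^{2}$ is a nonzero constant along $\alpha$. These are precisely the two hypotheses needed to apply Lemma 3.1, which asserts that (given $H_{n-2}\neq 0$) the sum of squares $H_{1}^{2}+\dots+H_{n-2}^{2}$ is a nonzero constant if and only if $H_{n-2}^{\prime}=-k_{n-1}H_{n-3}$. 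So I would apply the forward ("only if") direction of Lemma 3.1 to the constant-sum-of-squares conclusion of Theorem 3.3 and read off $H_{n-2}^{\prime}=-k_{n-1}H_{n-3}$, which is the desired identity.

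Concretely the write-up is three sentences: (1) since all the hypotheses hold, Theorem 3.3 gives $H_{n-2}\neq 0$ and $\sum_{i=1}^{n-2}H_{i}^{2}=\mathrm{const}\neq 0$; (2) since $H_{n-2}\neq 0$, Lemma 3.1 applies, and its forward implication turns the constancy of $\sum_{i=1}^{n-2}H_{i}^{2}$ into $H_{n-2}^{\prime}=-k_{n-1}H_{n-3}$; (3) conclude. One should also note that the derivative $H_{n-2}^{\prime}$ here means $V_{1}[H_{n-2}]$, consistent with the notation of Lemma 3.1, so there is no ambiguity in passing between the two statements.

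The only genuine "obstacle" is a matter of hygiene rather than mathematics: one must make sure the nonzero-constant conclusion of Theorem 3.3 is stated in the form "$H_{1}^{2}+\dots+H_{n-2}^{2}$ is a nonzero constant" exactly as Lemma 3.1 requires as input, and that $H_{n-2}\neq 0$ is available to license the lemma — both are indeed delivered verbatim by Theorem 3.3, so the two results dovetail with no gap. Hence I expect the proof to be essentially a one-line citation of Theorem 3.3 followed by a one-line citation of Lemma 3.1.

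\begin{proof}
All the hypotheses of Theorem 3.3 are satisfied: $M^{n}$ is complete, connected, smooth and without boundary and isometric to a Riemannian product $N\times\mathbb{R}$, $f\colon M^{n}\rightarrow\mathbb{R}$ is a non-trivial affine function, $\alpha$ is a Frenet curve of proper $n$, and $\alpha$ is an $f$-eikonal helix curve. Hence, by Theorem 3.3, we have $H_{n-2}\neq 0$ and
\begin{equation*}
H_{1}^{2}+H_{2}^{2}+\dots+H_{n-2}^{2}
\end{equation*}
is a nonzero constant. Since $H_{n-2}\neq 0$, the curve $\alpha$ meets the assumptions of Lemma 3.1. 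Applying the forward implication of Lemma 3.1 to the constancy of $H_{1}^{2}+H_{2}^{2}+\dots+H_{n-2}^{2}$, we obtain
\begin{equation*}
V_{1}\left[ H_{n-2}\right] =H_{n-2}^{\prime }=-k_{n-1}H_{n-3}\text{.}
\end{equation*}
This completes the proof.
\end{proof}
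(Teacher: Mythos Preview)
Your proposal is correct and takes essentially the same approach as the paper: the paper's own proof simply says ``It is obvious by using Theorem 3.3 and Lemma 3.1,'' and your write-up spells out exactly that chain of implications in detail.
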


\begin{proof}
It is obvious by using Theorem 3.3 and Lemma 3.1.
\end{proof}

\section{$f$\textbf{-eikonal }$V_{n}$\textbf{-slant helix curves and their
harmonic curvature functions}}

In this section, we define $f$-eikonal $V_{n}$-slant helix curves and we
give characterizations for a $f$-eikonal $V_{n}$-slant helix curve in $n$%
-dimensional Riemannian manifold $M^{n}$ by using harmonic curvature
functions in terms of $V_{n}$ of the curve.

\begin{definition}
\textbf{\ }Let $M^{n}$ be a Riemannian manifold and let $\alpha \left(
s\right) $ be a Frenet curve with the curvatures $k_{i}$. Then, harmonic
curvature functions of $\alpha $ are defined by $H_{i}^{\ast }:I\subset 
\mathbb{R}
\rightarrow 
\mathbb{R}
$ along $\alpha $ in $M^{n}$,%
\begin{equation*}
H_{0}^{\ast }=0,H_{1}^{\ast }=\frac{k_{n-1}}{k_{n-2}},H_{i}^{\ast }=\left\{
k_{n-i}H_{i-2}^{\ast }-H_{i-1}^{\ast \prime }\right\} \frac{1}{k_{n-(i+1)}}
\end{equation*}%
for $2\leq i\leq n-2$.
\end{definition}

\begin{definition}
\textbf{\ }Let $M^{n}$ be a Riemannian manifold with the metric $%
\left\langle ,\right\rangle $ and let $\alpha \left( s\right) $ be a Frenet
curve with the orthonormal frame $\left\{ V_{1},...,V_{n}\right\} $ in $M^{n}
$. Let $f:M^{n}\rightarrow 
\mathbb{R}
$ be a eikonal function along curve $\alpha $, i.e. $\left\Vert \nabla
f\right\Vert =$constant along the curve $\alpha $. If the function $%
\left\langle \nabla f,V_{n}\right\rangle $ is non-zero constant along $%
\alpha $, then $\alpha $ is called a $f$-eikonal $V_{n}$-slant helix curve.
And, $\nabla f$ is called the axis of the $f$-eikonal $V_{n}$-slant helix
curve $\alpha $.
\end{definition}

\begin{theorem}
Let $M^{n}$ be a $n$-dimensional Riemannian manifold with the metric $%
\left\langle ,\right\rangle $ and complete connected smooth without
boundary. Let $M^{n}$ be isometric to a Riemannian product $N\times 
\mathbb{R}
$. Let us assume that $f:M^{n}\rightarrow 
\mathbb{R}
$ be a non-trivial affine function (see main Theorem in \cite{Innami}) and $%
\alpha \left( s\right) $ be a Frenet curve of proper $n$ in $M^{n}$. If $%
\alpha $ is a $f$-eikonal $V_{n}$-slant helix curve with the axis $\nabla f$%
, then the system%
\begin{equation}
\left\langle V_{n-(i+1)},\nabla f\right\rangle =H_{i}^{\ast }\left\langle
V_{n},\nabla f\right\rangle \text{, }i=1,..,n-2
\end{equation}%
holds, where $\left\{ V_{1},V_{2},...,V_{n}\right\} $ and $\left\{
H_{1}^{\ast },...,H_{n-2}^{\ast }\right\} $ are the Frenet frame and the
harmonic curvatures of $\alpha $, respectively.
\end{theorem}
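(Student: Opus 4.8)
The plan is to mirror the proof of Theorem 3.1, but to transport the constancy information from the last Frenet field $V_{n}$ backwards along the frame instead of forwards from $V_{1}$. Two ingredients are available for free. First, since $f$ is a non-trivial affine function on $M^{n}$ and $M^{n}$ is isometric to $N\times \mathbb{R}$, Lemma 2.3 (see \cite{Sakai}) gives that $\nabla f$ is parallel, i.e. $\nabla _{V_{1}}\nabla f=0$. Second, since $\alpha $ is a $f$-eikonal $V_{n}$-slant helix curve, $\langle \nabla f,V_{n}\rangle $ is a nonzero constant along $\alpha $.

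First I would settle the low-index cases. Differentiating $\langle \nabla f,V_{n}\rangle =\text{const}$ in the direction $V_{1}$ and invoking the Frenet formula $\nabla _{V_{1}}V_{n}=-k_{n-1}V_{n-1}$ together with $\nabla _{V_{1}}\nabla f=0$ yields $k_{n-1}\langle \nabla f,V_{n-1}\rangle =0$, hence $\langle \nabla f,V_{n-1}\rangle =0$ because $k_{n-1}>0$; this is the system (4.1) in the degenerate case $i=0$, compatible with $H_{0}^{\ast }=0$. Differentiating this relation once more in the direction $V_{1}$, using $\nabla _{V_{1}}V_{n-1}=-k_{n-2}V_{n-2}+k_{n-1}V_{n}$ and $\nabla _{V_{1}}\nabla f=0$, I would obtain $-k_{n-2}\langle \nabla f,V_{n-2}\rangle +k_{n-1}\langle \nabla f,V_{n}\rangle =0$, so $\langle \nabla f,V_{n-2}\rangle =\frac{k_{n-1}}{k_{n-2}}\langle \nabla f,V_{n}\rangle =H_{1}^{\ast }\langle \nabla f,V_{n}\rangle $, which is (4.1) for $i=1$.

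Then I would run an induction on $i$. Assuming (4.1) for all indices $k$ with $0\leq k\leq i$, I would differentiate $\langle V_{n-(i+1)},\nabla f\rangle =H_{i}^{\ast }\langle V_{n},\nabla f\rangle $ in the direction $V_{1}$, use the Frenet formula $\nabla _{V_{1}}V_{n-(i+1)}=-k_{n-(i+2)}V_{n-(i+2)}+k_{n-(i+1)}V_{n-i}$, the parallelism $\nabla _{V_{1}}\nabla f=0$, and the constancy of $\langle V_{n},\nabla f\rangle $, to reach $-k_{n-(i+2)}\langle V_{n-(i+2)},\nabla f\rangle +k_{n-(i+1)}\langle V_{n-i},\nabla f\rangle =H_{i}^{\ast \prime }\langle V_{n},\nabla f\rangle $. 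Substituting the case $i-1$ of the induction hypothesis, $\langle V_{n-i},\nabla f\rangle =H_{i-1}^{\ast }\langle V_{n},\nabla f\rangle $, and solving for $\langle V_{n-(i+2)},\nabla f\rangle $ produces $\langle V_{n-(i+2)},\nabla f\rangle =\frac{1}{k_{n-(i+2)}}\{ k_{n-(i+1)}H_{i-1}^{\ast }-H_{i}^{\ast \prime }\} \langle V_{n},\nabla f\rangle $. Finally, comparing the bracket with the recursion of Definition 4.1 at index $i+1$, namely $H_{i+1}^{\ast }=\{ k_{n-(i+1)}H_{i-1}^{\ast }-H_{i}^{\ast \prime }\} \frac{1}{k_{n-(i+2)}}$, I would conclude $\langle V_{n-(i+2)},\nabla f\rangle =H_{i+1}^{\ast }\langle V_{n},\nabla f\rangle $, which closes the induction and proves (4.1) for $i=1,\dots ,n-2$.

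The part needing most care is purely the index arithmetic: one must verify that the curvatures $k_{n-(i+1)}$ and $k_{n-(i+2)}$ produced by differentiating the Frenet equations line up exactly with those in the definition of $H_{i+1}^{\ast }$, and that the auxiliary identity $\langle \nabla f,V_{n-1}\rangle =0$ is correctly read off as the $i=0$ instance so that the recursion can be launched. Beyond this, the argument is a verbatim reflection of the proof of Theorem 3.1 with the roles of $V_{1}$ and $V_{n}$ interchanged; no genuinely new difficulty is expected, since the parallelism of $\nabla f$ supplied by the affine hypothesis does all of the analytic work.
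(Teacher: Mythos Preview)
Your proposal is correct and follows essentially the same route as the paper's proof: differentiate $\langle \nabla f,V_{n}\rangle=\text{const}$ to obtain $\langle \nabla f,V_{n-1}\rangle=0$, differentiate again to get the case $i=1$, and then run the induction by differentiating (4.1), applying the Frenet formulas and $\nabla_{V_{1}}\nabla f=0$, and matching against the recursion in Definition~4.1. The only cosmetic difference is that you explicitly label the relation $\langle \nabla f,V_{n-1}\rangle=0$ as the $i=0$ instance via $H_{0}^{\ast}=0$, which the paper leaves implicit.
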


\begin{proof}
Since $\left\{ V_{1},...,V_{n}\right\} $ is the orthonormal frame of the
curve $\alpha $ in $M^{n}$, $\nabla f$ can be expressed in the form%
\begin{equation}
\nabla f=\lambda _{1}V_{1}+...+\lambda _{n}V_{n}\text{.}
\end{equation}%
Doing dot product with $V_{n}$ in each part of (4.2), we get%
\begin{equation}
\left\langle \nabla f,V_{n}\right\rangle =\lambda _{n}=\text{constant}
\end{equation}%
since $\alpha $ is a $f$-eikonal $V_{n}$-slant helix curve. If we take the
derivative in each part of (4.3) in the direction $V_{1}$ in $M^{n}$, then
we have%
\begin{equation}
\left\langle \nabla _{V_{1}}\nabla f,V_{n}\right\rangle +\left\langle \nabla
f,\nabla _{V_{1}}V_{n}\right\rangle =0\text{.}
\end{equation}%
On the other hand, from Lemma 2.3 (see \cite{Sakai}), $\nabla f$ is parallel
in $M^{n}$. That is, $\nabla _{V_{1}}\nabla f=0$. Hence, by using (4.4) and
Frenet formulas, we obtain%
\begin{equation}
-k_{n-1}\left\langle \nabla f,V_{n-1}\right\rangle =0\text{.}
\end{equation}%
And, since $k_{n-1}$ is positive function, from (4.5), we get%
\begin{equation}
\left\langle \nabla f,V_{n-1}\right\rangle =0\text{.}
\end{equation}%
By taking the derivative in each part of (4.6) in the direction $V_{1}$ in $%
M^{n}$, we can write the equality%
\begin{equation}
\left\langle \nabla _{V_{1}}\nabla f,V_{n-1}\right\rangle +\left\langle
\nabla f,\nabla _{V_{1}}V_{n-1}\right\rangle =0\text{.}
\end{equation}%
And, since $\nabla _{V_{1}}\nabla f=0$, by using (4.7) and Frenet formulas,
we obtain%
\begin{equation}
-k_{n-2}\left\langle \nabla f,V_{n-2}\right\rangle +k_{n-1}\left\langle
\nabla f,V_{n}\right\rangle =0\text{.}
\end{equation}%
Therefore, from (4.8), we have%
\begin{equation}
\left\langle \nabla f,V_{n-2}\right\rangle =\frac{k_{n-1}}{k_{n-2}}%
\left\langle \nabla f,V_{n}\right\rangle \text{.}
\end{equation}%
Moreover, since $H_{1}^{\ast }=\dfrac{k_{n-1}}{k_{n-2}}$, from (4.9), we can
write%
\begin{equation*}
\left\langle \nabla f,V_{n-2}\right\rangle =H_{1}^{\ast }\left\langle \nabla
f,V_{n}\right\rangle \text{.}
\end{equation*}%
It follows that the equality (4.1) is true for $i=1$. According to the
induction theory, let us assume that the equality (4.1) is true for all $k$,
where $1\leq k\leq i$ for some positive integers $i$. Then, we will prove
that the equality (4.1) is true for $i+1$. Since the equality (4.1) is true
for some positive integers $i$, we can write%
\begin{equation}
\left\langle V_{n-(i+1)},\nabla f\right\rangle =H_{i}^{\ast }\left\langle
V_{n},\nabla f\right\rangle
\end{equation}%
for some positive integers $i$. If we take derivative in each part of (4.10)
in the direction $V_{1}$ in $M^{n}$, we have%
\begin{equation}
\left\langle \nabla _{V_{1}}V_{n-(i+1)},\nabla f\right\rangle +\left\langle
V_{n-(i+1)},\nabla _{V_{1}}\nabla f\right\rangle =V_{1}\left[ H_{i}^{\ast
}\left\langle V_{n},\nabla f\right\rangle \right] \text{.}
\end{equation}%
And, by using (4.11) and Frenet formulas, we get%
\begin{equation}
-k_{n-(i+2)}\left\langle V_{n-(i+2)},\nabla f\right\rangle
+k_{n-(i+1)}\left\langle V_{n-i},\nabla f\right\rangle +\left\langle
V_{n-(i+1)},\nabla _{V_{1}}\nabla f\right\rangle =V_{1}\left[ H_{i}^{\ast
}\left\langle V_{n},\nabla f\right\rangle \right] \text{.}
\end{equation}%
Morever, $\nabla _{V_{1}}\nabla f=0$. Hence, from (4.12), we can write%
\begin{equation}
-k_{n-(i+2)}\left\langle V_{n-(i+2)},\nabla f\right\rangle
+k_{n-(i+1)}\left\langle V_{n-i},\nabla f\right\rangle =V_{1}\left[
H_{i}^{\ast }\left\langle V_{n},\nabla f\right\rangle \right] \text{.}
\end{equation}%
And, from (4.13), we obtain%
\begin{equation}
\left\langle V_{n-(i+2)},\nabla f\right\rangle =\left\{ -V_{1}\left[
H_{i}^{\ast }\left\langle V_{n},\nabla f\right\rangle \right]
+k_{n-(i+1)}\left\langle V_{n-i},\nabla f\right\rangle \right\} \frac{1}{%
k_{n-(i+2)}}\text{.}
\end{equation}%
On the other hand, since the equality (4.1) is true for $i-1$ according to
the induction hypothesis, we have%
\begin{equation}
\left\langle V_{n-i},\nabla f\right\rangle =H_{i-1}^{\ast }\left\langle
V_{n},\nabla f\right\rangle \text{.}
\end{equation}%
Therefore, by using (4.14) and (4.15), we get 
\begin{eqnarray}
\left\langle V_{n-(i+2)},\nabla f\right\rangle &=&\left\{ -V_{1}\left[
H_{i}^{\ast }\right] +k_{n-(i+1)}H_{i-1}^{\ast }\right\} \frac{1}{k_{n-(i+2)}%
}\left\langle V_{n},\nabla f\right\rangle \\
&=&\left\{ -H_{i}^{\ast \prime }+k_{n-(i+1)}H_{i-1}^{\ast }\right\} \frac{1}{%
k_{n-(i+2)}}\left\langle V_{n},\nabla f\right\rangle \text{.}  \notag
\end{eqnarray}%
Moreover, we obtain%
\begin{equation}
H_{i+1}^{\ast }=\left\{ k_{n-(i+1)}H_{i-1}^{\ast }-H_{i}^{\ast \prime
}\right\} \frac{1}{k_{n-(i+2)}}
\end{equation}%
for $i+1$ in the Definition 4.1. So, we have%
\begin{equation*}
\left\langle V_{n-(i+2)},\nabla f\right\rangle =H_{i+1}^{\ast }\left\langle
V_{n},\nabla f\right\rangle
\end{equation*}%
by using (4.16) and (4.17). It follows that the equality (4.1) is true for $%
i+1$. Consequently, we get%
\begin{equation*}
\left\langle V_{n-(i+1)},\nabla f\right\rangle =H_{i}^{\ast }\left\langle
V_{n},\nabla f\right\rangle
\end{equation*}%
for all $i$ according to induction theory. This completes the proof.
\end{proof}

\begin{theorem}
Let $M^{n}$ be a $n$-dimensional Riemannian manifold with the metric $%
\left\langle ,\right\rangle $ and complete connected smooth without
boundary. Let $M^{n}$ be isometric to a Riemannian product $N\times 
\mathbb{R}
$. Let us assume that $f:M^{n}\rightarrow 
\mathbb{R}
$ be a non-trivial affine function (see main Theorem in \cite{Innami}) and $%
\alpha \left( s\right) $ be a Frenet curve of proper $n$ in $M^{n}$. If $%
\alpha $ is a $f$-eikonal $V_{n}$-slant helix curve with the axis $\nabla f$%
, then the axis of the curve $\alpha $%
\begin{equation*}
\nabla f=\left\{ H_{n-2}^{\ast }V_{1}+...+H_{1}^{\ast }V_{n-2}+V_{n}\right\}
\left\langle \nabla f,V_{n}\right\rangle \text{,}
\end{equation*}%
where $\left\{ V_{1},V_{2},...,V_{n}\right\} $ and $\left\{ H_{1}^{\ast
},...,H_{n-2}^{\ast }\right\} $ are the Frenet frame and the harmonic
curvatures of $\alpha $, respectively.
\end{theorem}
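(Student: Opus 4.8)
The plan is to mimic the proof of Theorem 3.2, expanding $\nabla f$ in the Frenet frame and reading off the coefficients from Theorem 4.1. Since $\{V_{1},\ldots,V_{n}\}$ is an orthonormal frame along $\alpha$, I would first write
\begin{equation*}
\nabla f=\lambda _{1}V_{1}+\lambda _{2}V_{2}+\cdots +\lambda _{n}V_{n},
\end{equation*}
where $\lambda _{j}=\langle \nabla f,V_{j}\rangle $ for $j=1,\ldots ,n$. The next step is to record which of these coefficients are already pinned down. By hypothesis $\alpha$ is an $f$-eikonal $V_{n}$-slant helix curve, so $\lambda _{n}=\langle \nabla f,V_{n}\rangle $ is a nonzero constant; moreover, in the course of proving Theorem 4.1 it was shown (via $\nabla _{V_{1}}\nabla f=0$, the Frenet formulas, and $k_{n-1}>0$) that $\langle \nabla f,V_{n-1}\rangle =0$, i.e. $\lambda _{n-1}=0$.

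Then I would invoke Theorem 4.1 directly: for $i=1,\ldots ,n-2$ one has
\begin{equation*}
\lambda _{n-(i+1)}=\langle V_{n-(i+1)},\nabla f\rangle =H_{i}^{\ast }\langle V_{n},\nabla f\rangle =H_{i}^{\ast }\lambda _{n}.
\end{equation*}
Re-indexing with $j=n-(i+1)$ (so that $i=n-1-j$ runs through $1,\ldots ,n-2$ as $j$ runs through $n-2,\ldots ,1$), this reads $\lambda _{j}=H_{n-1-j}^{\ast }\lambda _{n}$ for every $j=1,\ldots ,n-2$; in particular $\lambda _{n-2}=H_{1}^{\ast }\lambda _{n}$, $\lambda _{n-3}=H_{2}^{\ast }\lambda _{n}$, \ldots, $\lambda _{1}=H_{n-2}^{\ast }\lambda _{n}$.

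Finally I would substitute these values back into the expansion of $\nabla f$: the $V_{n-1}$-term vanishes, the $V_{n}$-term contributes $\lambda _{n}V_{n}$, and the remaining terms collect as $\lambda _{n}$ times $H_{n-2}^{\ast }V_{1}+H_{n-3}^{\ast }V_{2}+\cdots +H_{1}^{\ast }V_{n-2}$, which gives
\begin{equation*}
\nabla f=\bigl\{ H_{n-2}^{\ast }V_{1}+\cdots +H_{1}^{\ast }V_{n-2}+V_{n}\bigr\} \langle \nabla f,V_{n}\rangle ,
\end{equation*}
as claimed. I do not expect any genuine obstacle here: all the analytic content — parallelism of $\nabla f$ on the product $N\times \mathbb{R}$, the vanishing of $\lambda _{n-1}$, and the recursive identification of the $\lambda _{j}$ with the $H_{i}^{\ast }$ — has already been established in Theorem 4.1, so the only point requiring care is the order-reversing relabelling $j\mapsto n-1-j$ between the two indexings, which must be tracked to land the coefficients $H_{n-2}^{\ast },\ldots ,H_{1}^{\ast }$ in the correct slots.
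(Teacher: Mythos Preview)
Your proposal is correct and follows essentially the same approach as the paper: expand $\nabla f$ in the Frenet frame, use the vanishing of $\langle \nabla f,V_{n-1}\rangle$, and read off the remaining coefficients from Theorem~4.1. The only cosmetic difference is that the paper re-derives $\langle \nabla f,V_{n-1}\rangle=0$ explicitly (via $\nabla_{V_{1}}\nabla f=0$ and the Frenet formula for $V_{n}$) rather than citing it from the proof of Theorem~4.1, but the argument is otherwise identical.
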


\begin{proof}
Since $\alpha $ is a $f$-eikonal $V_{n}$-slant helix curve , we can write%
\begin{equation}
\left\langle \nabla f,V_{n}\right\rangle =\text{constant.}
\end{equation}%
If we take the derivative in each part of (4.18) in the direction $V_{1}$ in 
$M^{n}$, then we have%
\begin{equation}
\left\langle \nabla _{V_{1}}\nabla f,V_{n}\right\rangle +\left\langle \nabla
f,\nabla _{V_{1}}V_{n}\right\rangle =0\text{.}
\end{equation}%
On the other hand, from Lemma 2.3 (see \cite{Sakai} ), $\nabla f$ is
parallel in $M^{n}$. That's why, $\nabla _{V_{1}}\nabla f=0$. Then, we obtain%
\begin{equation}
-k_{n-1}\left\langle \nabla f,V_{n-1}\right\rangle =0
\end{equation}%
by using (4.19) and Frenet formulas. Since $k_{n-1}$ is positive function,
(4.20) implies that%
\begin{equation*}
\left\langle \nabla f,V_{n-1}\right\rangle =0\text{.}
\end{equation*}%
Hence, we can write the axis of $\alpha $ as%
\begin{equation}
\nabla f=\lambda _{1}V_{1}+\lambda _{2}V_{2}+...+\lambda
_{n-2}V_{n-2}+\lambda _{n}V_{n}\text{.}
\end{equation}%
Moreover, from (4.21), we get%
\begin{eqnarray*}
\lambda _{1} &=&\left\langle \nabla f,V_{1}\right\rangle \\
\lambda _{2} &=&\left\langle \nabla f,V_{2}\right\rangle \\
&&. \\
&&. \\
&&. \\
\lambda _{n-2} &=&\left\langle \nabla f,V_{n-2}\right\rangle \\
\lambda _{n} &=&\left\langle \nabla f,V_{n}\right\rangle
\end{eqnarray*}%
by using Riemannian product. On the other hand, from Theorem 4.1, we know
that 
\begin{eqnarray}
\lambda _{1} &=&\left\langle \nabla f,V_{1}\right\rangle =H_{n-2}^{\ast
}\left\langle \nabla f,V_{n}\right\rangle \\
\lambda _{2} &=&\left\langle \nabla f,V_{2}\right\rangle =H_{n-3}^{\ast
}\left\langle \nabla f,V_{n}\right\rangle  \notag \\
&&.  \notag \\
&&.  \notag \\
&&.  \notag \\
\lambda _{n-2} &=&\left\langle \nabla f,V_{n-2}\right\rangle =H_{1}^{\ast
}\left\langle \nabla f,V_{n}\right\rangle  \notag \\
\lambda _{n} &=&\left\langle \nabla f,V_{n}\right\rangle  \notag
\end{eqnarray}%
Thus, it can be easily obtained the axis of the curve $\alpha $ as 
\begin{equation*}
\nabla f=\left\{ H_{n-2}^{\ast }V_{1}+...+H_{1}^{\ast }V_{n-2}+V_{n}\right\}
\left\langle \nabla f,V_{n}\right\rangle \text{,}
\end{equation*}%
by making use of the equality (4.21) and the system (4.22). This completes
the proof.
\end{proof}

\begin{theorem}
Let $M^{n}$ be a $n$-dimensional Riemannian manifold with the metric $%
\left\langle ,\right\rangle $ and complete connected smooth without
boundary. Let $M^{n}$ be isometric to a Riemannian product $N\times 
\mathbb{R}
$. Let us assume that $f:M^{n}\rightarrow 
\mathbb{R}
$ be a non-trivial affine function (see main Theorem in \cite{Innami}) and $%
\alpha \left( s\right) $ be a Frenet curve of proper $n$ in $M^{n}$. If $%
\alpha $ is a $f$-eikonal $V_{n}$-slant helix curve, then $H_{n-2}^{\ast
}\neq 0$ and $H_{1}^{\ast 2}+H_{2}^{\ast 2}+...+H_{n-2}^{\ast 2}$ is
non-zero constant, where $\left\{ H_{1}^{\ast },...,H_{n-2}^{\ast }\right\} $
is the harmonic curvatures of $\alpha $.
\end{theorem}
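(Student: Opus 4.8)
The plan is to run exactly the scheme that proved Theorem~3.3, but with the dual objects: Theorem~4.2 (the decomposition of the axis in terms of the starred harmonic curvatures) in place of Theorem~3.2, and Theorem~4.1 in place of Theorem~3.1. Throughout I will use that $\nabla f$ is parallel, i.e. $\nabla_{V_1}\nabla f = 0$ (Lemma~2.3 of \cite{Sakai}), together with the Frenet formulas, exactly as in the earlier proofs.

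First I would establish that $H_1^{\ast 2}+H_2^{\ast 2}+\dots+H_{n-2}^{\ast 2}$ is constant. By Theorem~4.2 we may write $\nabla f = \langle \nabla f, V_n\rangle\,\bigl(H_{n-2}^{\ast}V_1+\dots+H_1^{\ast}V_{n-2}+V_n\bigr)$. Taking the inner product of this expression with itself and using orthonormality of the Frenet frame gives $\langle \nabla f,\nabla f\rangle = \langle \nabla f, V_n\rangle^{2}\bigl(1+\sum_{i=1}^{n-2}H_i^{\ast 2}\bigr)$. On the other hand $\langle \nabla f,\nabla f\rangle = \|\nabla f\|^{2}$, and since $\alpha$ is an $f$-eikonal $V_n$-slant helix both $\|\nabla f\|$ and $\langle \nabla f, V_n\rangle$ are constant; writing $\langle \nabla f, V_n\rangle = \|\nabla f\|\cos\phi$ for the corresponding constant $\phi$, the two computations combine to $\cos^{2}\phi\,\bigl(1+\sum_{i=1}^{n-2}H_i^{\ast 2}\bigr)=1$, whence $H_1^{\ast 2}+H_2^{\ast 2}+\dots+H_{n-2}^{\ast 2}=\tan^{2}\phi$ is constant.

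Next I would show $H_{n-2}^{\ast}\neq 0$, arguing by contradiction as in Theorem~3.3. Assume $H_{n-2}^{\ast}=0$. Theorem~4.1 with $i=n-2$ gives $\langle V_1,\nabla f\rangle = H_{n-2}^{\ast}\langle V_n,\nabla f\rangle = 0$. Differentiating $\langle V_1,\nabla f\rangle = 0$ along $V_1$ and using $\nabla_{V_1}\nabla f = 0$ together with $\nabla_{V_1}V_1 = k_1V_2$ yields $k_1\langle V_2,\nabla f\rangle = 0$, hence $\langle V_2,\nabla f\rangle = 0$ because $k_1$ is positive. Then Theorem~4.1 with $i=n-3$ gives $\langle V_2,\nabla f\rangle = H_{n-3}^{\ast}\langle V_n,\nabla f\rangle$, and since $\langle V_n,\nabla f\rangle$ is a nonzero constant, $H_{n-3}^{\ast}=0$. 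Repeating the step (differentiate $\langle V_j,\nabla f\rangle = 0$, use $\nabla_{V_1}V_j = -k_{j-1}V_{j-1}+k_jV_{j+1}$ with the already-established $\langle V_{j-1},\nabla f\rangle = 0$ to conclude $\langle V_{j+1},\nabla f\rangle = 0$, then read off the corresponding starred harmonic curvature from Theorem~4.1) successively gives $\langle V_j,\nabla f\rangle = 0$ for $j=1,\dots,n-2$ and $H_i^{\ast}=0$ for $i=n-2,n-3,\dots,1$. In particular $H_1^{\ast}=\tfrac{k_{n-1}}{k_{n-2}}=0$, which is impossible since all curvatures of $\alpha$ are nowhere zero. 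Hence $H_{n-2}^{\ast}\neq 0$, and with the first part this also shows $H_1^{\ast 2}+H_2^{\ast 2}+\dots+H_{n-2}^{\ast 2}$ is a \emph{nonzero} constant, completing the proof.

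The expansion of $\langle\nabla f,\nabla f\rangle$ and the repeated differentiations are routine; the only delicate point is the index bookkeeping in the descending cascade — making sure the chain $\langle V_1,\nabla f\rangle=0 \Rightarrow \langle V_2,\nabla f\rangle=0 \Rightarrow \cdots$ runs precisely through $j=n-2$, and that via the correspondence $i\leftrightarrow n-(i+1)$ of Theorem~4.1 it is matched with $H_{n-2}^{\ast},H_{n-3}^{\ast},\dots,H_1^{\ast}$, so that the contradiction lands exactly on $H_1^{\ast}=k_{n-1}/k_{n-2}$. (Alternatively, the constant part could be deduced from an $H^{\ast}$-analogue of Lemma~3.1, but the direct computation above is shorter.)
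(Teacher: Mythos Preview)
Your proposal is correct and follows essentially the same approach as the paper's own proof: use Theorem~4.2 to expand $\langle\nabla f,\nabla f\rangle$ and deduce that $\sum H_i^{\ast 2}$ is constant, then argue by contradiction via Theorem~4.1 and repeated differentiation along $V_1$ (with $\nabla_{V_1}\nabla f=0$) to obtain the cascade $H_{n-2}^{\ast}=0\Rightarrow\cdots\Rightarrow H_1^{\ast}=0$, contradicting $H_1^{\ast}=k_{n-1}/k_{n-2}>0$. Your write-up is in fact slightly more explicit than the paper's in two places (you record the value $\tan^{2}\phi$ and you spell out that the step from $\langle V_j,\nabla f\rangle=0$ to $\langle V_{j+1},\nabla f\rangle=0$ uses the previously established $\langle V_{j-1},\nabla f\rangle=0$), but the argument is the same.
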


\begin{proof}
Let $\alpha $ be a $f$-eikonal $V_{n}$-slant helix curve and $\left\{
V_{1},...,V_{n}\right\} $ be the Frenet frame of $\alpha $. Then, from
Theorem 4.2, we know that 
\begin{equation}
\nabla f=\left\{ H_{n-2}^{\ast }V_{1}+...+H_{1}^{\ast }V_{n-2}+V_{n}\right\}
\left\langle \nabla f,V_{n}\right\rangle \text{.}
\end{equation}%
Therefore, from (4.23), we can write%
\begin{equation}
\left\langle \nabla f,\nabla f\right\rangle =\left\langle \nabla
f,V_{n}\right\rangle ^{2}\left( H_{n-2}^{\ast 2}+...+H_{1}^{\ast 2}+1\right) 
\text{.}
\end{equation}%
Moreover, by the definition of Riemannian metric, we have 
\begin{equation*}
\left\langle \nabla f,\nabla f\right\rangle =\left\Vert \nabla f\right\Vert
^{2}\text{.}
\end{equation*}%
According to this Theorem, $\alpha $ is a $f$-eikonal $V_{n}$-slant helix
curve. So, $\left\Vert \nabla f\right\Vert =$constant and $\left\langle
\nabla f,V_{n}\right\rangle =$constant along $\alpha $. Hence, from (4.24),
we obtain%
\begin{equation*}
H_{1}^{\ast 2}+H_{2}^{\ast 2}+...+H_{n-2}^{\ast 2}=\text{constant.}
\end{equation*}%
Now, we will show that $H_{n-2}^{\ast }\neq 0$ . We assume that $%
H_{n-2}^{\ast }=0$. Then, for $i=n-2$ in (4.1),%
\begin{equation}
\left\langle V_{1},\nabla f\right\rangle =H_{n-2}^{\ast 2}\left\langle
\nabla f,V_{n}\right\rangle =0\text{.}
\end{equation}%
If we take derivative in each part of (4.25) in the direction $V_{1}$ on $%
M^{n}$, then we have%
\begin{equation}
\left\langle \nabla _{V_{1}}V_{1},\nabla f\right\rangle +\left\langle
V_{1},\nabla _{V_{1}}\nabla f\right\rangle =0\text{.}
\end{equation}%
On the other hand, from Lemma 2.3 (see \cite{Sakai}), $\nabla f$ is parallel
in $M^{n}$. That's why $\nabla _{V_{1}}\nabla f=0$. Then, from (4.26), we
have $\left\langle \nabla _{V_{1}}V_{1},\nabla f\right\rangle
=k_{1}\left\langle V_{2},\nabla f\right\rangle =0$ by using the Frenet
formulas. Since $k_{1}$ is positive, $\left\langle V_{2},\nabla
f\right\rangle =0$. Now, for $i=n-3$ in (4.1),%
\begin{equation*}
\left\langle V_{2},\nabla f\right\rangle =H_{n-3}^{\ast }\left\langle
V_{n},\nabla f\right\rangle \text{.}
\end{equation*}%
And, since $\left\langle V_{2},\nabla f\right\rangle =0$, $H_{n-3}^{\ast }=0$%
. Continuing this process, we get $H_{1}^{\ast }=0$. Let us recall that $%
H_{1}^{\ast }=\frac{k_{n-1}}{k_{n-2}}$, thus we have a contradiction because
all the curvatures are nowhere zero. Consequently, $H_{n-2}^{\ast }\neq 0$.
This completes the proof.
\end{proof}

\begin{lemma}
Let $\alpha \left( s\right) $ be a Frenet curve of proper $n$ in $n$%
-dimensional Riemannian manifold $M^{n}$ and let $H_{n-2}^{\ast }\neq 0$ be
for $i=n-2$. Then, $H_{1}^{\ast 2}+H_{2}^{\ast 2}+...+H_{n-2}^{\ast 2}$ is a
nonzero constant if and only if $V_{1}\left[ H_{n-2}^{\ast }\right]
=H_{n-2}^{\ast \prime }=k_{1}H_{n-3}^{\ast }$ ,where $V_{1}$ and $\left\{
H_{1}^{\ast },...,H_{n-2}^{\ast }\right\} $ are the unit vector tangent
vector field and the Harmonic curvatures of $\alpha $, respectively.
\end{lemma}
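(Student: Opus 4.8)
The plan is to adapt, almost verbatim, the argument used for Lemma 3.1, replacing the forward recursion of the $H_i$ by the backward recursion defining the starred curvatures in Definition 4.1. The first step is to extract from that definition the differential identities linking successive $H_i^{\ast}$. Writing $H_i^{\ast} = \{k_{n-i}H_{i-2}^{\ast} - H_{i-1}^{\ast\prime}\}\frac{1}{k_{n-(i+1)}}$ as $k_{n-(i+1)}H_i^{\ast} = k_{n-i}H_{i-2}^{\ast} - H_{i-1}^{\ast\prime}$ for $2 \leq i \leq n-2$ and shifting the index, one obtains
\[
H_i^{\ast\prime} = k_{n-(i+1)}H_{i-1}^{\ast} - k_{n-(i+2)}H_{i+1}^{\ast}, \qquad 1 \leq i \leq n-3,
\]
where the case $i=1$ reduces to $H_1^{\ast\prime} = -k_{n-3}H_2^{\ast}$ because $H_0^{\ast} = 0$. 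These identities are pure algebraic consequences of Definition 4.1 and hold regardless of whether the sum of squares is constant; they drive both implications.

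For the forward implication, I would assume $\Sigma := H_1^{\ast 2} + \dots + H_{n-2}^{\ast 2}$ is a nonzero constant, differentiate to get $\sum_{i=1}^{n-2} H_i^{\ast} H_i^{\ast\prime} = 0$, and isolate the top term, so that $H_{n-2}^{\ast}H_{n-2}^{\ast\prime} = -\sum_{i=1}^{n-3} H_i^{\ast}H_i^{\ast\prime}$. Multiplying each identity above (for $1 \leq i \leq n-3$) by $H_i^{\ast}$ and summing, the products $k_{n-(i+1)}H_{i-1}^{\ast}H_i^{\ast}$ and $k_{n-(i+2)}H_i^{\ast}H_{i+1}^{\ast}$ telescope across consecutive indices; the term $-k_{n-3}H_1^{\ast}H_2^{\ast}$ coming from $i=1$ cancels the $+k_{n-3}H_1^{\ast}H_2^{\ast}$ coming from $i=2$, and only $-k_1 H_{n-3}^{\ast}H_{n-2}^{\ast}$ survives. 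Hence $H_{n-2}^{\ast}H_{n-2}^{\ast\prime} = k_1 H_{n-3}^{\ast}H_{n-2}^{\ast}$, and dividing by $H_{n-2}^{\ast} \neq 0$ gives $H_{n-2}^{\ast\prime} = k_1 H_{n-3}^{\ast}$.

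For the converse, I would start from the telescoping identity just established, namely $\sum_{i=1}^{n-3} H_i^{\ast}H_i^{\ast\prime} = -k_1 H_{n-3}^{\ast}H_{n-2}^{\ast}$, which uses only Definition 4.1, multiply the hypothesis $H_{n-2}^{\ast\prime} = k_1 H_{n-3}^{\ast}$ by $H_{n-2}^{\ast}$ to get $H_{n-2}^{\ast}H_{n-2}^{\ast\prime} = k_1 H_{n-3}^{\ast}H_{n-2}^{\ast}$, and add, obtaining $\sum_{i=1}^{n-2} H_i^{\ast}H_i^{\ast\prime} = 0$. Integrating shows $\Sigma$ is constant, and it is nonzero because its last summand $H_{n-2}^{\ast 2}$ is everywhere positive by hypothesis.

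The main obstacle is purely combinatorial: correctly handling the doubly shifted subscripts $n-(i+1)$ and $n-(i+2)$ in the telescoping sum and checking that exactly the two boundary contributions survive cancellation — the $i=1$ term and the first half of the $i=2$ term — so that they annihilate each other and leave only $-k_1 H_{n-3}^{\ast}H_{n-2}^{\ast}$. No new geometric input beyond Definition 4.1 is needed; in particular the hypotheses on $M^n$ and $f$ play no role here, exactly as in Lemma 3.1.
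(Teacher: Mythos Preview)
Your proposal is correct and follows essentially the same approach as the paper's own proof: both extract from Definition~4.1 the identities $H_i^{\ast\prime}=k_{n-(i+1)}H_{i-1}^{\ast}-k_{n-(i+2)}H_{i+1}^{\ast}$ (with the degenerate $i=1$ case), multiply by $H_i^{\ast}$, telescope, and then combine with the derivative of the sum of squares in both directions. Your presentation is slightly cleaner in that you treat the $i=1$ case as the $H_0^{\ast}=0$ specialization of the general identity rather than stating it separately, but the argument is the same.
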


\begin{proof}
First, we assume that $H_{1}^{\ast 2}+H_{2}^{\ast 2}+...+H_{n-2}^{\ast 2}$
is a nonzero constant . Consider the functions%
\begin{equation*}
H_{i}^{\ast }=\left\{ k_{n-i}H_{i-2}^{\ast }-H_{i-1}^{\ast \prime }\right\} 
\frac{1}{k_{n-(i+1)}}
\end{equation*}%
for $3\leq i\leq n-2$. So, from the equality, we can write%
\begin{equation}
k_{n-(i+1)}H_{i}^{\ast }=k_{n-i}H_{i-2}^{\ast }-H_{i-1}^{\ast \prime }\text{%
, }3\leq i\leq n-2\text{.}
\end{equation}%
Hence, in (4.27), if we take $i+1$ instead of $i$, we get%
\begin{equation}
H_{i}^{\ast \prime }=k_{n-(i+1)}H_{i-1}^{\ast }-k_{n-(i+2)}H_{i+1}^{\ast
},2\leq i\leq n-3
\end{equation}%
together with%
\begin{equation}
H_{1}^{\ast \prime }=-k_{n-3}H_{2}^{\ast }\text{.}
\end{equation}%
On the other hand, since $H_{1}^{\ast 2}+H_{2}^{\ast 2}+...+H_{n-2}^{\ast 2}$
is constant, we have%
\begin{equation*}
H_{1}^{\ast }H_{1}^{\ast \prime }+H_{2}^{\ast }H_{2}^{\ast \prime
}+...+H_{n-2}^{\ast }H_{n-2}^{\ast \prime }=0
\end{equation*}%
and so, 
\begin{equation}
H_{n-2}^{\ast }H_{n-2}^{\ast \prime }=-H_{1}^{\ast }H_{1}^{\ast \prime
}-H_{2}^{\ast }H_{2}^{\ast \prime }-...-H_{n-3}^{\ast }H_{n-3}^{\ast \prime }%
\text{.}
\end{equation}%
By using (4.28) and (4.29), we obtain%
\begin{equation}
H_{1}^{\ast }H_{1}^{\ast \prime }=-k_{n-3}H_{1}^{\ast }H_{2}^{\ast }
\end{equation}%
and%
\begin{equation}
H_{i}^{\ast }H_{i}^{\ast \prime }=k_{n-(i+1)}H_{i-1}^{\ast }H_{i}^{\ast
}-k_{n-(i+2)}H_{i}^{\ast }H_{i+1}^{\ast }\text{, }2\leq i\leq n-3\text{.}
\end{equation}%
Therefore, by using (4.30), (4.31) and (4.32), a algebraic calculus shows
that%
\begin{equation*}
H_{n-2}^{\ast }H_{n-2}^{\ast \prime }=k_{1}H_{n-3}^{\ast }H_{n-2}^{\ast }%
\text{.}
\end{equation*}%
Since $H_{n-2}^{\ast }\neq 0$, we get the relation $H_{n-2}^{\ast \prime
}=k_{1}H_{n-3}^{\ast }$.

Conversely, we assume that%
\begin{equation}
H_{n-2}^{\ast \prime }=k_{1}H_{n-3}^{\ast }.
\end{equation}%
By using (4.33) and $H_{n-2}^{\ast }\neq 0$, we can write%
\begin{equation*}
H_{n-2}^{\ast }H_{n-2}^{\ast \prime }=k_{1}H_{n-2}^{\ast }H_{n-3}^{\ast }
\end{equation*}%
From (4.32), we have 
\begin{eqnarray*}
\text{for }i &=&n-3\text{, \ \ \ \ \ }H_{n-3}^{\ast }H_{n-3}^{\ast \prime
}=k_{2}H_{n-4}^{\ast }H_{n-3}^{\ast }-k_{1}H_{n-3}^{\ast }H_{n-2}^{\ast } \\
\text{for }i &=&n-4\text{, \ \ \ \ \ }H_{n-4}^{\ast }H_{n-4}^{\ast \prime
}=k_{3}H_{n-5}^{\ast }H_{n-4}^{\ast }-k_{2}H_{n-4}^{\ast }H_{n-3}^{\ast } \\
\text{for }i &=&n-5\text{, \ \ \ \ \ }H_{n-5}^{\ast }H_{n-5}^{\ast \prime
}=k_{4}H_{n-6}^{\ast }H_{n-5}^{\ast }-k_{3}H_{n-5}^{\ast }H_{n-4}^{\ast } \\
&&\cdot \\
&&\cdot \\
&&\cdot \\
\text{for }i &=&2\text{, \ \ \ \ \ \ \ \ \ \ }H_{2}^{\ast }H_{2}^{\ast
\prime }=k_{n-3}H_{1}^{\ast }H_{2}^{\ast }-k_{n-4}H_{2}^{\ast }H_{3}^{\ast }
\end{eqnarray*}%
and from (4.31), we have%
\begin{equation*}
H_{1}^{\ast }H_{1}^{\ast \prime }=-k_{n-3}H_{1}^{\ast }H_{2}^{\ast }\text{.}
\end{equation*}%
So, an algebraic calculus shows that%
\begin{equation}
H_{1}^{\ast }H_{1}^{\ast \prime }+H_{2}^{\ast }H_{2}^{\ast \prime }+...+%
\text{\ }H_{n-5}^{\ast }H_{n-5}^{\ast \prime }+H_{n-4}^{\ast }H_{n-4}^{\ast
\prime }+H_{n-3}^{\ast }H_{n-3}^{\ast \prime }+H_{n-2}^{\ast }H_{n-2}^{\ast
\prime }=0\text{.}
\end{equation}%
And, by integrating (4.34), we can easily say that%
\begin{equation*}
H_{1}^{\ast 2}+H_{2}^{\ast 2}+...+H_{n-2}^{\ast 2}
\end{equation*}%
is a nonzero constant. This completes the proof.
\end{proof}

\begin{corollary}
Let $M^{n}$ be a $n$-dimensional Riemannian manifold with the metric $%
\left\langle ,\right\rangle $ and complete connected smooth without
boundary. Let $M^{n}$ be isometric to a Riemannian product $N\times 
\mathbb{R}
$. Let us assume that $f:M^{n}\rightarrow 
\mathbb{R}
$ be a non-trivial affine function (see main Theorem in \cite{Innami}) and $%
\alpha \left( s\right) $ be a Frenet curve of proper $n$ in $M^{n}$. If $%
\alpha $ is a $f$-eikonal $V_{n}$-slant helix curve, then $H_{n-2}^{\ast
\prime }=k_{1}H_{n-3}^{\ast }$.
\end{corollary}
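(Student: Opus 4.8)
The plan is to derive this immediately from the two preceding results in this section, exactly as Corollary 3.1 was obtained from Theorem 3.3 and Lemma 3.1. The key observation is that Corollary 4.2 is a direct consequence of Theorem 4.3 and Lemma 4.1: the theorem supplies precisely the two hypotheses that the lemma requires.

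First I would invoke Theorem 4.3. Since $\alpha$ is a $f$-eikonal $V_{n}$-slant helix curve (under the standing assumptions on $M^{n}$ and $f$), Theorem 4.3 gives us two facts simultaneously: that $H_{n-2}^{\ast}\neq 0$, and that $H_{1}^{\ast 2}+H_{2}^{\ast 2}+\cdots +H_{n-2}^{\ast 2}$ is a nonzero constant. Next I would apply Lemma 4.1 to the curve $\alpha$: the lemma's hypothesis ``$H_{n-2}^{\ast }\neq 0$'' is now verified, so the lemma asserts the equivalence between $H_{1}^{\ast 2}+\cdots +H_{n-2}^{\ast 2}$ being a nonzero constant and the identity $H_{n-2}^{\ast \prime }=k_{1}H_{n-3}^{\ast }$. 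Feeding in the ``nonzero constant'' conclusion from Theorem 4.3 through the forward direction of this equivalence yields $H_{n-2}^{\ast \prime }=k_{1}H_{n-3}^{\ast }$, which is exactly the claim.

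There is essentially no obstacle here, since all the analytic work (the Frenet-formula computations and the algebraic telescoping of the terms $H_{i}^{\ast}H_{i}^{\ast\prime}$) has already been carried out in Theorem 4.3 and Lemma 4.1. The only point requiring a moment's care is compatibility of hypotheses: Lemma 4.1 is stated for an arbitrary Frenet curve of proper $n$ with $H_{n-2}^{\ast}\neq 0$, so one should note that the $f$-eikonal $V_{n}$-slant helix $\alpha$ of Theorem 4.3 is indeed such a curve, with $H_{n-2}^{\ast}\neq 0$ guaranteed by that same theorem; then the two results chain together without friction. Hence the proof reduces to the one-line remark that the conclusion is obvious from Theorem 4.3 and Lemma 4.1.
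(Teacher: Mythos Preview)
Your proposal is correct and follows exactly the paper's own approach: the paper's proof is the single sentence ``It is obvious by using Theorem 4.3 and Lemma 4.1.'' Your expansion of how Theorem 4.3 supplies both hypotheses of Lemma 4.1 (namely $H_{n-2}^{\ast}\neq 0$ and the constancy of $\sum H_i^{\ast 2}$) is precisely the intended chaining.
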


\begin{proof}
It is obvious by using Theorem 4.3 and Lemma 4.1.
\end{proof}

\end{document}